\documentclass{amsart}
\usepackage{amsmath}
\usepackage{paralist}
\usepackage{amsfonts}
\usepackage{amssymb}
\usepackage{amsthm}
\usepackage{amscd}
\usepackage{amsrefs}
\usepackage[colorlinks=true]{hyperref}
\hypersetup{urlcolor=blue, citecolor=red}

  \textheight=8.2 true in
   \textwidth=5.0 true in
    \topmargin 30pt
     \setcounter{page}{1}

\newtheorem{theorem}{Theorem}[section]

\newtheorem{lemma}[theorem]{Lemma}
\newtheorem{proposition}[theorem]{Proposition}

\theoremstyle{definition}
\newtheorem{definition}[theorem]{Definition}
\newtheorem{remark}[theorem]{Remark}

\newcommand{\T}{\mathbb{T}}
\newcommand{\R}{\mathbb{R}}
\newcommand{\C}{\mathbb{C}}
\newcommand{\Z}{\mathbb{Z}}
\newcommand{\N}{\mathbb{N}}

\begin{document}
\title[Periodic modified Benjamin-Ono equation below $H^{1/2}(\mathbb{T})$]{On a priori estimates and existence of periodic solutions to the modified Benjamin-Ono equation below $H^{1/2}(\mathbb{T})$}

\author{Robert Schippa}
\address{Fakult\"at f\"ur Mathematik, Universit\"at Bielefeld, Postfach 10 01 31, 33501 Bielefeld, Germany}
\keywords{dispersive equations, a priori estimates, modified Benjamin-Ono equation, derivative nonlinear Schr\"odinger equation, short-time Fourier restriction norm method}
\email{robert.schippa@uni-bielefeld.de}
\subjclass[2010]{ 35Q55 ,  42B37}

\begin{abstract}
A priori estimates and existence of real-valued periodic solutions to the modified Benjamin-Ono equation with initial data in $H^s$ for $s>1/4$ are proved locally in time. The approach relies on frequency dependent time localization, after which dispersive properties from Euclidean space are recovered. The same regularity results are proved for the cubic derivative nonlinear Schr\"odinger equation.
\end{abstract}

\maketitle

\bigskip

\section{Introduction}
\label{section:Introduction}
A priori estimates and the existence of periodic solutions to Schr\"odinger-like equations with cubic derivative nonlinearity are discussed.
This includes the modified Benjamin-Ono equation 
\begin{equation}
\label{eq:mBO}
\left\{\begin{array}{cl}
\partial_t u + \mathcal{H} \partial_{xx} u &= \pm \partial_x (u^3)/3 ,  \; (t,x) \in \R \times \T, \\
u(0) &= u_0 \in H_{\R}^s(\mathbb{T}), \end{array} \right.
\end{equation}
where real-valued initial data $u_0$ are considered on the circle $\mathbb{T}=\mathbb{R}/(2 \pi \mathbb{Z})$. Throughout this paper, $\mathcal{H}$ denotes the Hilbert transform, i.e.,
\begin{equation*}
\begin{split}
\mathcal{H}:L^2(\mathbb{T}) &\rightarrow L^2(\mathbb{T}), \\
f &\mapsto \mathcal{F}_x^{-1}(-i \mathrm{sgn}(\cdot) \mathcal{F}_x(f)(\cdot)).
\end{split}
\end{equation*}
Conserved quantities of the flow are the mass
\begin{equation*}
\int_{\T} u^2(x,t) dx = \int_{\T} u_0^2 dx
\end{equation*}
and the energy
\begin{equation*}
E(u) = \frac{1}{2} \int_{\T} (D_x^{1/2} u)^2 dx \mp \int_{\T} \frac{u^4}{12} dx,
\end{equation*}
with the upper/lower sign matching the one from \eqref{eq:mBO}. For the local-in-time analysis in this paper, the focusing properties will not be relevant.

It turns out that the following derivative nonlinear Schr\"odinger equation (dNLS) is also amenable to the employed arguments:
\begin{equation}
\label{eq:dNLS}
\left\{\begin{array}{cl}
i \partial_t u + \partial_{xx} u &= i \partial_x (|u|^2 u) ,  \quad (t,x) \in \R \times \mathbb{T}, \\
u(0) &= u_0 \in H^s(\mathbb{T}). \end{array} \right.
\end{equation}

From the point of view of dispersive equations, the models look very similar. However, \eqref{eq:dNLS} is completely integrable (see \cite{KaupNewell1978}), in contrast to \eqref{eq:mBO}, which is not known to be completely integrable. Since it is useful to point out that the methods of this article do not crucially hinge on complete integrability, we choose to analyze \eqref{eq:mBO} in detail. A discussion of modifications for the analysis of \eqref{eq:dNLS} is postponed to the end of the article.

On the real line, the equations share the scaling symmetry
\begin{equation}
\label{eq:scalingSymmetry}
u(t,x) \rightarrow \lambda^{-1/2} u(\lambda^{-2} t, \lambda^{-1} x),
\end{equation}
which leads to the scaling critical regularity $s_c = 0$, but it is well-known that the data-to-solution mapping fails to be $C^3$ for $s<1/2$. We give a more detailed account on the local well-posedness theory next.

On the real line, \eqref{eq:mBO} was analyzed by Guo in \cite{Guo2011MBO}. It was shown that \eqref{eq:mBO} is locally well-posed for complex-valued initial data with uniform continuity of the data-to-solution mapping provided that $s \geq 1/2$ and that the $L^2$-norm of the initial data is sufficiently small. For smooth and real-valued solutions a priori estimates have been established for $s>1/4$. See also the earlier work \cite{MolinetRibaud2004} and references therein.

For real-valued solutions on the circle local well-posedness in $H^{1/2}(\mathbb{T})$ was shown by Guo et al. in \cite{GuoLinMolinet2014}.

On the real line, Takaoka showed in \cite{Takaoka1999} that \eqref{eq:dNLS} is locally well-posed in $H^{1/2}(\mathbb{R})$ making use of Fourier restriction spaces and a gauge transform to remedy the problematic nonlinear term $|u|^2 \partial_x u$. Global well-posedness in $H^s(\mathbb{R})$, $s>1/2$, with sufficiently small $L^2(\mathbb{R})$-norm was later shown by employing the $I$-method in \cite{CollianderKeelStaffilaniTakaokaTao2002} (see \cite{MiaoWuXu2011} for $s=1/2$). Moreover, in \cite{Gruenrock2005} Gr\"unrock showed local well-posedness in almost scaling critical Fourier Lebesgue spaces.

Adapting the Fourier restriction spaces and the gauge transform to the periodic setting, Herr showed in \cite{Herr2006} that \eqref{eq:dNLS} is locally well-posed in $H^{1/2}(\mathbb{T})$. Again, the data-to-solution mapping fails to be $C^3$ below $H^{1/2}(\T)$ (see \cite{Herr2006}) and even fails to be uniformly continuous below $H^{1/2}(\T)$ (see \cite{Mosincat2018}). Takaoka showed in \cite{Takaoka2016} the existence of weak solutions and a priori estimates for $s>12/25$ conditional upon small $L^2(\mathbb{T})$-norm.

Global well-posedness of \eqref{eq:dNLS} in the periodic setting was shown in $H^s(\mathbb{T})$, $s \geq 1/2$, with sufficiently small $L^2(\mathbb{T})$-norm in \cite{Mosincat2018} (see also \cite{MosincatOh2015}).

Moreover, in \cite{GruenrockHerr2008} Gr\"unrock-Herr proved that \eqref{eq:dNLS} is locally well-posed in Fourier Lebesgue spaces, which scale like $H^{(1/4)+}(\T)$. This was recently claimed to be improved to almost scaling critical Fourier Lebesgue spaces in \cite{DengNahmodYue2019}.

The purpose of this note is to show that the methods from \cite{Guo2011MBO} to show a priori estimates for Sobolev regularities, where the data-to-solution mapping fails to be uniformly continuous, extend to the periodic case.

The key observation is that after localization in time to small frequency dependent time intervals we can recover dispersive properties observable in Euclidean space. This recovers Strichartz estimates on frequency dependent time scales.

Short-time linear Strichartz estimates on general compact manifolds were proved by Burq et al. in \cite{BurqGerardTzvetkov2004}, and bilinear short-time Strichartz estimates on the circle were shown by Moyua-Vega \cite{MoyuaVega2008} (see also \cite{Hani2012} for general compact manifolds). Since the short-time Strichartz estimates on compact manifolds resemble the Euclidean case, we obtain the same regularity for a priori estimates in the periodic setting. None of the aforementioned estimates on compact manifolds can hold true for a time-scale, which does not depend on the size of the frequencies under consideration. The following theorem is proved:
\begin{theorem}
\label{thm:mainResultMBO}
Let $s>1/4$ and $u_0 \in H_{\mathbb{R}}^s(\T)$. There is $T=T(s,\Vert u_0 \Vert_{H^s})$ such that there is a solution $u \in C([-T,T],H^s(\T))$ to \eqref{eq:mBO} in the sense of generalized functions, and we find the a priori estimate
\begin{equation*}
\sup_{t \in [-T,T]} \Vert u(t) \Vert_{H^s} \leq C(s,\Vert u_0 \Vert_{H^s}) \Vert u_0 \Vert_{H^s}
\end{equation*}
to hold. Moreover, we have $C(s,\Vert u_0 \Vert_{H^s}) \leq C_s$ and $T(s,\Vert u_0 \Vert_{H^s}) \geq 1$ as $\Vert u_0 \Vert_{H^s} \rightarrow 0$.
\end{theorem}
The deployed arguments can be perceived as a combination of the perturbative approach and the energy method. We will use adapted function spaces (cf. \cite{HadacHerrKoch2009}) to capture the dispersive effects. To remedy the derivative loss, we localize time reciprocally to the frequency size. This also requires to prove energy estimates.

 This approach was presented by Ionescu et al. in \cite{IonescuKenigTataru2008} in the framework of short-time Fourier restriction spaces, but see also the precursing works of Koch-Tzvetkov \cite{KochTzvetkov2003} and Christ-Colliander-Tao \cite{ChristCollianderTao2008}, where Strichartz estimates on frequency dependent time scales were utilized.

Recall that for a general dispersive equation (see e.g. \cite{Tao2006} for notation)
\begin{equation}
\label{eq:genDispersiveEquation}
\left\{\begin{array}{cl}
i \partial_t u + \omega(\nabla/i) u &= F(u) ,  \; (t,x) \in \mathbb{R} \times M, \; M \in \{ \mathbb{T}, \mathbb{R} \}, \\
u(0) &= u_0 \in H^s(M), \end{array} \right.
\end{equation}
we have the linear energy estimate in Fourier restriction spaces
\begin{equation}
\label{eq:XsbEnergyEstimate}
\Vert \eta(t) u \Vert_{X_{\omega}^{s,b}} \lesssim_{\eta} \Vert u_0 \Vert_{H^s} + \Vert F(u) \Vert_{X_{\omega}^{s,b-1}}
\end{equation}
for $b>1/2$. Hence, one has to prove a nonlinear estimate 
\begin{equation*}
\Vert F(u) \Vert_{X_{\omega}^{s,b-1}} \lesssim G(\Vert u \Vert_{X_{\omega}^{s,b}})
\end{equation*}
to carry out a contraction mapping argument. In this article, we will not use Fourier restriction norms, but adapted norms, which can be viewed as a refinement, and which do not require Fourier transformations in time.

Performing a frequency dependent time localization erases the dependence of the short-time adapted norm on the initial value. This leads to an estimate of the short-time adapted norm $F^{s}(T)$ in terms of a norm $N^s(T)$ for the nonlinearity and an energy norm $E^{s}(T)$. The energy norm takes into account the contribution of dyadic frequency ranges uniform in time before summing (see Lemma \ref{lem:ShorttimeEnergyEstimate}).

Therefore, one also has to propagate this energy norm in terms of the short-time adapted norm. This is carried out in Proposition \ref{prop:shorttimeEnergyEstimateMBO}. As for the usual Fourier restriction norms, one has to estimate the nonlinearity in the $N^s(T)$-norm in terms of the short-time adapted norm (see Proposition \ref{prop:shorttimeTrilinearEstimateMBO}).

For $s >1/4$ and initial data with sufficiently small $H^s$-norm, we show the bounds (cf. \cite{IonescuKenigTataru2008,KochTataru2007}) for solutions $u$ to \eqref{eq:mBO}
\begin{equation}
\label{eq:EstimatesIntroduction}
\left\{\begin{array}{cl}
\Vert u \Vert_{F^s(T)} &\lesssim \Vert u \Vert_{E^s(T)} + \Vert \partial_x(u^3/3) \Vert_{N^s(T)} \\
\Vert \partial_x(u^3/3) \Vert_{N^s(T)} &\lesssim \Vert u \Vert_{F^s(T)}^3 \\
\Vert u \Vert_{E^s(T)}^2 &\lesssim \Vert u_0 \Vert_{H^s}^2 + T \Vert u \Vert_{F^{s-\varepsilon}(T)}^6,
\end{array} \right.
\end{equation}
and the proof will be concluded by a continuity argument. To construct solutions via compactness arguments, a smoothing effect in the energy estimate will be used.

To deal with arbitrary initial data, we rescale to reduce to small initial data on the rescaled torus. However, the $H^s_\lambda$-norm  of a fixed initial value converges to the $L^2(\lambda \T)$-norm upon rescaling $\lambda \rightarrow \infty$ because the equation is $L^2$-critical. Thus, the $L^2$-component of the initial data is insensitive to rescaling. The remedy is to introduce a low frequency weight giving rise to a subcritical norm. This will be detailed in Section \ref{section:Notation}, and we refer to the end of Section \ref{section:Notation} for further discussion. The rescaling argument was previously used for periodic solutions to the modified Korteweg-de Vries equation in the short-time context in \cite{Molinet2012}; the idea to use a subcritical Sobolev norm, which distinguishes high and low frequencies to achieve smallness upon rescaling, was deployed on the real line in \cite{Guo2011MBO}.

An obstacle to control the nonlinear interaction for derivative nonlinearities is $High \times Low \rightarrow High$-interaction as this requires to overcome one whole derivative. In Lemma \ref{lem:HighLowLowHighInteractionMBO} we prove that the time localization $\Delta T= \Delta T(N)=N^{-1}$ allows us to control the derivative loss via a short-time bilinear Strichartz estimate and gives rise to the regularity threshold $s>1/4$ for the short-time nonlinear estimate.

To illustrate the underlying principle, assume that $k_1 \leq k_2 \leq k_3-5$. Let the $P_{k_i}$ denote frequency projectors to frequencies of size comparable to $2^{k_i}$ and $u(t)$, $v(t)$, $w(t)$ free solutions. The recovery of the derivative loss is based on the observation
\begin{equation}
\label{eq:ControlDerivativeLoss}
\begin{split}
&\quad \Vert \partial_x P_{k_4} (P_{k_1} u(t) P_{k_2} v(t) P_{k_3} w(t)) \Vert_{L_t^1([0,T],L_x^2)} \\
&\lesssim T^{1/2} 2^{k_4} \Vert P_{k_1} u(t) P_{k_2} v(t) P_{k_3} w(t) \Vert_{L_t^2([0,T],L_x^2)} \\
&\lesssim T^{1/2} 2^{k_4} \Vert P_{k_1} u(t) \Vert_{L_t^\infty L_x^\infty} \Vert P_{k_2} v(t) P_{k_3} w(t) \Vert_{L_t^2([0,T], L_x^2)} \\
&\lesssim T^{1/2} 2^{k_4} 2^{k_1/2} \Vert u(0) \Vert_{L_x^2} 2^{-k_3/2} \Vert v(0) \Vert_{L_x^2} \Vert w(0) \Vert_{L_x^2} \\
&\lesssim 2^{k_1/2} \Vert u(0) \Vert_{L_x^2} \Vert v(0) \Vert_{L_x^2} \Vert w(0) \Vert_{L_x^2}. 
\end{split}
\end{equation}
The first and second inequality is due to H\"older's inequality, and the third is due to Bernstein's inequality and a short-time bilinear Strichartz estimate, which is only valid for periodic solutions provided that $T \lesssim 2^{-k_4}$.

 It seems possible to improve the short-time nonlinear estimate by increasing time localization. But this would make the energy estimate worse as time intervals $[0,T]$ have to be partitioned into $T (\Delta T(N))^{-1}$ subintervals for frequencies with size comparable to $N$, and $1/4$ seems to be the regularity threshold for the energy estimate with $\Delta T(N)=N^{-1}$. On the other hand, it is not clear how to overcome the derivative loss for decreased frequency dependent time localization. This distinguishes $\Delta T(N)=N^{-1}$.

Furthermore, the energy estimate strongly hinges on the symmetries of solutions. Here, a variant of the $I$-method (cf. \cite{KochTataru2007,CollianderKeelStaffilaniTakaokaTao2003}) is used to introduce correction terms to the Sobolev energies. The underlying symmetrization arguments break down for differences of solutions. Actually, this failure is inevitable due to the well-known breakdown of uniform continuity of the data-to-solution mapping below $H^{1/2}$. Thus, the approach only yields a priori estimates and no continuous dependence on the initial data, as this would require an energy estimate for differences of solutions. We refer to the beginning of Section \ref{section:EnergyEstimatesMBO} for further discussion.

The arguments can be applied to \eqref{eq:dNLS} yielding the same results as in Theorem \ref{thm:mainResultMBO}. This method is divergent from Takaoka's approach \cite{Takaoka2016}, where a gauge transform was applied to ameliorate the derivative loss, and the analysis from \cite{CollianderKeelStaffilaniTakaokaTao2002} and \cite{Herr2006} was combined to prove regularity results below $H^{1/2}(\mathbb{T})$ conditional upon small $L^2$-norm.

The paper is organized as follows: in Section \ref{section:Notation} we introduce notations, and in Section \ref{section:ProofMBO} we show how to conclude the proof of Theorem \ref{thm:mainResultMBO} with the above set of estimates. The proof of the short-time trilinear estimate is carried out in Section \ref{section:ShorttimeTrilinearEstimateMBO} and relies on short-time Strichartz estimates. These are discussed in Section \ref{section:shorttimeLinearEstimates}, and the propagation of the energy norm via short-time Strichartz estimates is carried out in Section \ref{section:EnergyEstimatesMBO}. In Section \ref{section:ModificationsDNLS} we discuss necessary modifications to prove the corresponding regularity results for \eqref{eq:dNLS}.
\section{Notation and Function spaces}
\label{section:Notation}
In this section we collect notations and record basic properties of the utilized short-time function spaces. Most of the properties we consider below were already pointed out in \cite{ChristHolmerTataru2012} for the correspondent spaces on the real line. With the proofs carrying over, we shall be brief.

Since we consider the models with general spatial period $2 \pi \lambda$, we also consider function spaces with general period.
When the subscript $\lambda$ is omitted in the description of a function spaces, the space with $\lambda = 1$ is referred to. The Lebesgue spaces on $\lambda \mathbb{T} := \mathbb{R}/ (2 \pi \lambda \mathbb{Z})$ are defined by
\begin{equation*}
\Vert f \Vert_{L_x^p(\lambda \T)} = \left( \int_0^{2 \pi \lambda} |f(x)|^p dx \right)^{1/p} \quad f:\lambda \T \rightarrow \C,
\end{equation*}
where $p \in [1,\infty]$ with the usual modification for $p = \infty$. We have to keep track of possible dependencies of constants on the spatial scale $\lambda$ and use the same conventions as in \cite{CollianderKeelStaffilaniTakaokaTao2003}. Let $(d\xi)_{\lambda}$ be the normalized counting measure on $\Z/\lambda$:
\begin{equation*}
\int a(\xi) (d\xi)_{\lambda} := \frac{1}{\lambda} \sum_{\xi \in \Z/\lambda} a(\xi).
\end{equation*}
The Fourier transform on $\lambda \T$ is defined by
\begin{equation*}
\hat{f}(\xi) = \int_{\lambda \T} f(x) e^{-i \xi x} dx \quad (\xi \in \Z/\lambda),
\end{equation*}
and the Fourier inversion formula is given by
\begin{equation*}
f(x) = \frac{1}{2 \pi} \int \hat{f}(\xi) e^{ix \xi} (d\xi)_{\lambda}.
\end{equation*}
We find the usual properties of the Fourier transform to hold:
\begin{align*}
\Vert f \Vert_{L_x^2(\lambda \T)} &= \frac{1}{2 \pi} \Vert \hat{f} \Vert_{L^2_{(d\xi)_{\lambda}}} \quad \textrm{(Plancherel)}, \\
\int_{\lambda \T} f(x) \overline{g(x)} dx &= \frac{1}{2 \pi} \int \hat{f}(\xi) \overline{\hat{g}(\xi)} (d\xi)_{\lambda} \quad \textrm{(Parseval)}.
\end{align*}
For further properties, see \cite[p.~702]{CollianderKeelStaffilaniTakaokaTao2003}. For $l,s \geq 0$, we define the Sobolev space $H^{s,l}_{\lambda}$ of $L^2$-functions with finite norm
\begin{equation*}
\Vert f \Vert^2_{H^{s,l}_{\lambda}} = |\hat{f}(0)|^2 + \Vert \hat{f}(\xi)  |\xi|^{l} \Vert^2_{L^2_{(d\xi)_{\lambda}}(|\xi| \leq 1)} + \Vert \hat{f}(\xi) |\xi|^{s} \Vert^2_{L^2_{(d\xi)_{\lambda}}(|\xi| > 1)}
\end{equation*}
and consider the set $H^\infty_{\lambda} = \bigcap_{s} H^{s,0}_\lambda$. To avoid a separate analysis of the zero frequency, we restrict to the subset of functions with vanishing mean. For solutions to \eqref{eq:mBO} or \eqref{eq:dNLS} this means no loss of generality because the mean is a conserved quantity. In the following, we confine ourselves to solutions with vanishing mean.

After rescaling as in \eqref{eq:scalingSymmetry}, \eqref{eq:mBO} becomes
\begin{equation}
\label{eq:RescaledSolution}
\left\{ \begin{array}{cl}
\partial_t u^\lambda + \mathcal{H} \partial_{xx} u^\lambda &= \partial_x ((u^\lambda)^3/3), \quad (t,x) \in \mathbb{R} \times \lambda \mathbb{T} \\
u^\lambda(0) &= u^\lambda_0 \in H^s(\lambda \mathbb{T})
\end{array} \right.
\end{equation}
with $u^\lambda(t,x)=\lambda^{-1/2} u(\lambda^{-2} t, \lambda^{-1} x)$ and $\mathcal{H}$ denotes the Hilbert transform on $\lambda \T$, which is defined as a Fourier multiplier like on $\mathbb{T}$.

For a $2 \pi \lambda$-space-periodic function $v: \mathbb{R} \times \lambda \T \rightarrow \C$ with time variable $t \in \R$, we define the space-time Fourier transform
\begin{equation*}
\tilde{v}(\tau,\xi) = (\mathcal{F}_{t,x} v)(\tau,\xi) = \int_{\R} dt \int_{\lambda \T} dx e^{-ikx} e^{-it \tau} v(t,x) \quad (\xi \in \Z/\lambda, \; t \in \R).
\end{equation*}
The space-time Fourier transform is inverted by
\begin{equation*}
v(t,x) = \frac{1}{(2 \pi)^2} \int_{\R} d\tau \int_{\Z / \lambda } (d\xi)_{\lambda} e^{i x \xi} e^{i t \tau} \tilde{v}(\tau,\xi).
\end{equation*}

Set $\mathbb{N}_0 = \mathbb{N} \cup \{ 0 \}$. We denote dyadic numbers by capital letters $N,K,J \in 2^{\mathbb{Z}}$ and their binary logarithm by the corresponding minuscules $n,k,j \in \mathbb{Z}$.  We consider unions of intervals $I_{n} = \left\{\xi \in \mathbb{R} \; | \; |\xi| \in [N, 2N) \right\}, \;  N = 2^n, \; n \in \mathbb{Z}$. The frequency space is partitioned by $(I_n)_{n \in \Z}$ and $\{ 0 \}$.
 
The Littlewood-Paley projector onto frequencies of size $N=2^n$, $n \in \mathbb{Z}$ is denoted by $P_n:L^2(\lambda \mathbb{T}) \rightarrow L^2(\lambda \mathbb{T})$, that is $(P_n u) \widehat (\xi) = 1_{I_n}(\xi) \hat{u}(\xi)$, and we write for the zero frequency $(P_{[0]} u) \widehat (\xi) = 1_{ \{0 \} }(\xi) \hat{u}(\xi)$.

The dispersion relation \eqref{eq:genDispersiveEquation} for the Benjamin-Ono equation is given by $\omega(\xi)=-\xi |\xi|$. We remark that the properties of the function spaces reviewed in this section are independent of the dispersion relation, and a more general setting is considered in the following.

To maximize the gain in the modulation variable in \eqref{eq:XsbEnergyEstimate}, it is desirable to choose $b=1/2$. However, $H^{1/2}(\R)$ fails to embed into $L^\infty(\mathbb{R})$, and thus, properties of free solutions do not transfer to $X^{s,1/2}_{\omega}$-functions \\
(cf. \cite[Lemma~2.9,~p.~100]{Tao2006}). Our remedy is to work with adapted function spaces, namely $U^2$-spaces, which can be identified as predual space for the space of functions with bounded $2$-variation, $V^2$. In \cite{IonescuKenigTataru2008} a Besov refinement in the modulation was used to cover the endpoint case $b=1/2$. Since an analysis in the modulation variable does not appear to yield further insight, we choose to work with the adapted spaces instead. We contend that this also simplifies some of the proofs of properties for the more classical function spaces introduced in \cite{IonescuKenigTataru2008}.

Here, we collect the most important function space properties for the sake of self-containedness and refer to \cite{HadacHerrKoch2009} (see also \cite{HadacHerrKoch2009Erratum}) for a careful introduction to $U^p$-/$V^p$-spaces.

Let $I=[a,b)$ with $-\infty \leq a < b \leq \infty$. The $V_{\lambda}^p(I)$-spaces contain functions of bounded $p$-variation, $p \in [1,\infty)$, which take values in $L^2(\lambda \T)$ (although the function space properties remain valid for an arbitrary Hilbert space). Also, we indicate $L^p$-spaces for space and time variables by $x$, respectively $t$ in the following. $U_{\lambda}^p(I)$ are atomic spaces, which are predual to the $V_{\lambda}^p(I)$-spaces. We let $\mathcal{Z}(I)$ denote the set of all possible partitions of $I$; these are sequences $a=t_0 < t_1 < \ldots < t_K = b$.
\begin{definition}
Let $\{ t_k \}_{k=0}^K \in \mathcal{Z}(I)$ and $\{ \phi_k \}_{k=0}^{K-1} \subseteq L_x^2(\lambda \T)$ with
\begin{equation*}
\sum_{k=1}^K \Vert \phi_{k-1} \Vert_{L^2_x(\lambda \T)}^p = 1.
\end{equation*}
Then, the function
\begin{equation*}
a(t) = \sum_{k=1}^K \phi_{k-1} \chi_{[t_{k-1},t_k)}(t)
\end{equation*}
is said to be a $U_{\lambda}^p(I)$-atom. Further,
\begin{equation*}
U_{\lambda}^p(I)= \{ f: I \rightarrow L_x^2(\lambda \T) \, | \, \Vert f \Vert_{U_{\lambda}^p(I)} < \infty \},
\end{equation*}
where
\begin{equation*}
\Vert f \Vert_{U_{\lambda}^p(I)} = \inf \{ \Vert \lambda_k \Vert_{\ell_k^1} \, | \, f(t) = \sum_{k=0}^\infty \lambda_k a_k(t), \; a_k - U_{\lambda}^p-\text{atom} \}.
\end{equation*}
\end{definition}

By virtue of the atomic representation, we find elements $u(t) \in U_{\lambda}^p(I)$ to be continuous from the right, having left-limits everywhere and admitting only countably many discontinuities (see \cite[Proposition~2.2,~p.~921]{HadacHerrKoch2009}). For properties of the spaces with bounded $p$-variation, see \cite{Wiener1979}.
\begin{definition}
We set
\begin{equation*}
V_{\lambda}^p(I) = \{ v: I \rightarrow L_x^2(\lambda \T) \, | \, \Vert v \Vert_{V_{\lambda}^p(I)} < \infty \},
\end{equation*}
where
\begin{equation*}
\Vert v \Vert_{V_{\lambda}^p(I)} = \sup_{\{t_k \}_{k=0}^{K-1} \in \mathcal{Z}(I)} \left( \sum_{k=1}^K \Vert v(t_k) - v(t_{k-1}) \Vert_{L^2_x(\lambda \T)}^p \right)^{1/p} < \infty.
\end{equation*}
\end{definition}

We recall that one-sided limits exist for $V_{\lambda}^p$-functions, and $V_{\lambda}^p$-functions can only have countably many discontinuities (see \cite[Proposition~2.4,~p.~922]{HadacHerrKoch2009}).

In the following we confine ourselves to consider the subspaces $V^p_{\lambda,-,rc} \subseteq V_{\lambda}^p$ of right-continuous functions vanishing at $-\infty$. For the sake of brevity, we write $V_{\lambda}^p$ for $V^p_{\lambda,-,rc}$, and occasionally, the subscript $\lambda$ indicating the spatial period will be omitted as the following properties are independent of the base Hilbert space.

\begin{definition}
We define the following subspaces of $V^2$, respectively $U^2$:
\begin{equation*}
\begin{split}
V_0^2(I) &= \{ v \in V^2(I) \, | \, v(a) = 0 \}, \\
U_{0}^2(I) &= \{ u \in U^2(I) \, | \, u(b) = 0 \}.
\end{split}
\end{equation*}
\end{definition}
These function spaces behave well with sharp cutoff functions contrary to $X^{s,b}$-spaces, where one has to use smooth cutoff functions. We have the following estimates for sharp cutoffs (see \cite[Equation~(2.2),~p.~55]{ChristHolmerTataru2012}):
\begin{equation*}
\begin{split}
\Vert u \Vert_{U^p(I)} &= \Vert \chi_I u \Vert_{U^p([-\infty,\infty))}, \\
\Vert v \Vert_{V^p(I)} &\leq \Vert \chi_I v \Vert_{V^p([-\infty,\infty))} \leq 2 \Vert v \Vert_{V^p(I)}.
\end{split}
\end{equation*}
The relation of $U^p$-/$V^p$-spaces with Besov spaces is given by the embeddings (see \cite[Equation~(32),~p.~963]{KochTataru2012})
\begin{equation*}
\dot{B}_1^{1,1/p} \hookrightarrow U^p \hookrightarrow V^p_{rc} \hookrightarrow \dot{B}_{\infty}^{1,1/p}.
\end{equation*}
We record the following further embedding properties:
\begin{lemma}
\label{lem:UpVpFunctionSpaceProperties}
Let $I = [a,b)$.
\begin{enumerate}
\item[1.] If $1 \leq p \leq q < \infty$, then $\Vert u \Vert_{U^q} \leq \Vert u \Vert_{U^p}$ and $\Vert u \Vert_{V^q} \leq \Vert u \Vert_{V^p}$.
\item[2.] If $1 \leq p < \infty$, then $\Vert u \Vert_{V^p} \lesssim \Vert u \Vert_{U^p}$.
\item[3.] If $1 \leq p < q < \infty$, $u(a) = 0$ and $u \in V^p$ is right-continuous, then $\Vert u \Vert_{U^q} \lesssim \Vert u \Vert_{V^p}$.
\item[4.] Let $1 \leq p < q <\infty$, $E$ be a Banach space and $T$ be a linear operator with
\begin{equation*}
\Vert Tu \Vert_E \leq C_q \Vert u \Vert_{U^q}, \; \Vert Tu \Vert_E \leq C_p \Vert u \Vert_{U^p}, \text{ with } 0 < C_p \leq C_q.
\end{equation*}
Then,
\begin{equation*}
\Vert T u \Vert_E \lesssim \log \langle \frac{C_q}{C_p} \rangle \Vert u \Vert_{V^p}.
\end{equation*}
\end{enumerate}
\end{lemma}
\begin{proof}
The first part follows from the embedding properties of the $\ell^p$-norms and the second part from considering $U^p$-atoms. For the third claim, see \cite[Corollary~2.6,~p.~923]{HadacHerrKoch2009}, and the fourth claim is proved in \cite[Proposition~2.20.,~p.~930]{HadacHerrKoch2009}.
\end{proof}

\begin{definition}
We define
\begin{equation*}
DU^2(I) = \{ \partial_t u \, | \, u \in U^2(I) \}
\end{equation*}
with time derivative in the sense of tempered distributions.
\end{definition}
For any $f \in DU^2(I)$, the function $u \in U^2(I)$ satisfying $\partial_t u = f$ is unique up to constants. Fixing the right limit to be zero, we can set
\begin{equation*}
\Vert f \Vert_{DU^2(I)} = \Vert u \Vert_{U^2(I)}, \quad f = \partial_t u, \quad u \in U_0^2,
\end{equation*}
which makes $DU^2(I)$ a Banach space. We have the following embedding property (see \cite[p.~56]{ChristHolmerTataru2012}):
\begin{lemma}
Let $I = [a,b)$. Then,
\begin{equation*}
L^1(I) \hookrightarrow DU^2(I).
\end{equation*}
\end{lemma}
We have the following lemma on $DU-V$-duality:
\begin{lemma}{\cite[Proposition~2.10,~p.~925]{HadacHerrKoch2009}}
We have $(DU^2(I))^* = V_0^2(I)$ with respect to a duality relation, which for $f \in L^1(I) \subseteq DU^2(I)$ is given by
\begin{equation*}
\langle f, v \rangle = \int_a^b \langle f(t), v(t) \rangle_{L_x^2} dt = \int_a^b \int f \overline{v} dx dt.
\end{equation*}
Moreover,
\begin{equation*}
\Vert f \Vert_{DU^2(I)} = \sup_{\Vert v \Vert_{V_0^2} = 1} \left| \int_a^b \int f \overline{v} dx dt \right|.
\end{equation*}
\end{lemma}
For general $f \in DU^2(I)$ one can still consider a related mapping, but this requires more care (cf. \cite[Theorem~2.8,~p.~924]{HadacHerrKoch2009}).

Adapting $U^p$-/$V^p$-spaces to the linear propagator $e^{i t \omega(\nabla/i)}$ yields the following function spaces:
\begin{equation*}
\begin{split}
\Vert u \Vert_{U^p_{\omega}(I;H)} &= \Vert e^{-it \omega(\nabla/i)} u \Vert_{U^p(I;H)}, \\
\Vert v \Vert_{V^p_{\omega}(I;H)} &= \Vert e^{-it \omega(\nabla/i)} v \Vert_{V^p(I;H)}, \\
\Vert u \Vert_{DU^2_{\omega}(I;H)} &= \Vert e^{-it \omega(\nabla/i)} u \Vert_{DU^2(I;H)}.
\end{split}
\end{equation*}
$U^p_\omega$-atoms are piecewise free solutions, which allows us to transfer Strichartz estimates to $U^p$-functions. This will be referred to as transfer principle in the following. For a more precise notion, see \cite[Proposition~2.19,~p.~929]{HadacHerrKoch2009}.
%
%

In this article, we consider the dispersion relations $\omega_{BO}(\xi)=-\xi |\xi|$ and $\omega_{\Delta}(\xi) = - \xi^2$. The subscripts $BO$ and $\Delta$ will indicate the dispersion relation under consideration.

Next, we turn to frequency dependent time localization. Let $T \in (0,1]$ and choose $\Delta T = \Delta T(N) = N^{-1}$ for frequencies with size comparable to $N \in 2^{\mathbb{N}_0}$ following the heuristic \eqref{eq:ControlDerivativeLoss} given in the Introduction. We define the norm of the short-time function space $F_{k,\lambda}$ for functions $u=P_k u$ with $k \in \mathbb{N}_0$ by
\begin{equation*}
\Vert u \Vert^2_{F_{k,\lambda}(T)} = \sup_{\substack{|I|=\min(K^{-1},T), \\ I \subseteq [0,T]}} \Vert P_{k} u \Vert^2_{U^2_{\omega}(I;L^2_x(\lambda \T))} 
\end{equation*}
and similarly,
\begin{equation*}
\Vert f \Vert^2_{N_{k,\lambda}(T)} = \sup_{\substack{|I|=\min(K^{-1},T), \\ I \subseteq [0,T]}} \Vert P_{k} v \Vert^2_{DU^2_{\omega}(I;L^2_x(\lambda \T))}.
\end{equation*}
The function spaces are assembled by Littlewood-Paley decomposition. The short-time space, into which we place the solution, is defined by
\begin{equation*}
\Vert u \Vert^2_{F_{\lambda}^{s,l}(T)} = \Vert P_{[0]} u \Vert^2_{U^2_\omega([0,T],L^2_x(\lambda \T))} + \sum_{k \leq 0} 2^{2kl} \Vert P_k u \Vert^2_{F_{k,\lambda}(T)} + \sum_{k >0 } 2^{2ks} \Vert P_k u \Vert_{F_{k,\lambda}(T)}^2.
\end{equation*}
The function space $N^{s,l}$, into which we will place the nonlinearity, is given by
\begin{equation*}
\begin{split}
\Vert f \Vert^2_{N_{\lambda}^{s,l}(T)} &= \Vert P_{[0]} f \Vert^2_{DU^2_\omega([0,T],L^2_x(\lambda \T))} \\
&\quad + \sum_{k \leq 0} 2^{2kl} \Vert P_k f \Vert_{N_{k,\lambda}(T)}^2 + \sum_{k > 0} 2^{2ks} \Vert P_k u \Vert_{N_{k,\lambda}(T)}^2.
\end{split}
\end{equation*}

The frequency dependent time localization erases the dependence on the initial data away from the origin. Instead of a common energy space $C([0,T],H^{s,l})$, we have to consider the following space: 
\begin{equation*}
\begin{split}
\Vert u \Vert^2_{E_{\lambda}^{s,l}(T)} &= \Vert P_{[0]} u(0) \Vert_{L^2_x(\lambda \T)}^2 + \sum_{k \leq 0} 2^{2kl} \Vert P_k u(0) \Vert^2_{L^2_x(\lambda \T)} \\
&\quad + \sum_{k > 0} 2^{2ks} \sup_{t \in [0,T]} \Vert P_k u(t) \Vert^2_{L^2_x(\lambda \T)}.
\end{split}
\end{equation*}
This space deviates from the usual energy space logarithmically. The following linear estimate substitutes for the energy estimate \eqref{eq:XsbEnergyEstimate}.
\begin{lemma}
\label{lem:ShorttimeEnergyEstimate}
Let $T \in (0,1]$ and $u$ be a solution to \eqref{eq:genDispersiveEquation}. Then, we find the following estimate to hold:
\begin{equation*}
\Vert u \Vert_{F^{s,l}_{\lambda}(T)} \lesssim \Vert u \Vert_{E_{\lambda}^{s,l}(T)} + \Vert F(u) \Vert_{N_{\lambda}^{s,l}(T)}.
\end{equation*}
\end{lemma}
\begin{proof}
A proof in the context of a specific evolution equation, which readily generalizes to arbitrary dispersion relations, is given in \cite[Lemma~3.1.,~p.~59]{ChristHolmerTataru2012}.
\end{proof}
We end the section with a discussion of the use of rescaling, which is often not required for the large data theory. Phrasing the estimates in terms of Fourier restriction spaces, we need the full range of regularity in the modulation variable from $-1/2$ to $1/2$ to prove a nonlinear estimate for $High \times Low \times Low \rightarrow High$-interaction in Lemma \ref{lem:HighLowLowHighInteractionMBO}. Thus, there is no slack in the modulation regularity when it is well-known that modulation regularity can be traded for powers of the time-scale $T$ (cf. \cite[Lemma~2.11,~p.~101]{Tao2006}, or \cite[Lemma~3.4.,~p.~1670]{GuoOh2018} in the short-time context). If this were the case, then we could upgrade the nonlinear estimate from \eqref{eq:EstimatesIntroduction} to
\begin{equation*}
\Vert \partial_x (u^3) \Vert_{N^s(T)} \lesssim T^\theta \Vert u \Vert^3_{F^s(T)}
\end{equation*}
for some $\theta >0$, and the a priori estimates would follow also for large data.

We resort to considering rescaled solutions on $\lambda \mathbb{T}$ with $\lambda \geq 1$, and together with implicit constants in the corresponding set of estimates \eqref{eq:EstimatesIntroduction} independent of $\lambda$, this allows us to prove a priori estimates for initial data with vanishing mean, which are small in the $H^{s,l}_{\lambda}$-norm. Since this is a subcritical norm provided that the considered functions have vanishing mean, this procedure works for arbitrary initial data in $H^{s,l}_{\lambda}$. Alternatively, one could consider weighted norms as in \cite{Zhang2016}; here, we prefer to rescale the torus to illustrate the scale-independence of the argument, which can potentially be used in the analysis of the long-period limit or the global behavior of solutions.
\section{Proof of new regularity results for the mo\-di\-fied Ben\-ja\-min-\-O\-no equation}
\label{section:ProofMBO}
As typical for the construction of solutions, we prove a priori estimates for smooth solutions first. In the second step, we use a compactness argument to prove existence of solutions. For this, we will use a smoothing effect in the energy estimates. In the context of short-time norms, this strategy was previously followed in \cite{GuoOh2018}, where the arguments were given in the context of the cubic nonlinear Schr\"odinger equation. As argued in Section \ref{section:Notation}, it is enough to consider initial data with vanishing mean. This will be implicit in the following. Our first aim is to prove the following proposition:
\begin{proposition}
\label{prop:aPrioriEstimatesSmoothSolutionsMBO}
Let $s >1/4$ and $u_0 \in H_{\mathbb{R}}^{\infty}(\T)$. There is $T=T(s,\Vert u_0 \Vert_{H^s})$ such that we find the estimate
\begin{equation*}
\sup_{t \in [-T,T]} \Vert u(t) \Vert_{H^s} \leq C(s,\Vert u_0 \Vert_{H^s}) \Vert u_0 \Vert_{H^s}
\end{equation*}
to hold for the unique smooth solution $u$ to \eqref{eq:mBO}. Moreover, we find $T \geq 1$ and $C(s,\Vert u_0 \Vert_{H^s}) = D(s)$ as $\Vert u_0 \Vert_{H^s} \rightarrow 0$.
\end{proposition}
The idea is to control the $F^{s,l}_{\lambda}(T)$-norm of the rescaled solution. This suffices to conclude an a priori bound for the Sobolev norm due to $F_{\lambda}^{s,l}(T) \hookrightarrow L^\infty_T H^{s,l}_{\lambda}$.

Continuity and limit properties of $T^\prime \mapsto \Vert u \Vert_{E^{s,l}_{\lambda}(T^\prime)}, \Vert u \Vert_{F^{s,l}_{\lambda}(T)}$ as $T^\prime \rightarrow 0$ to carry out the bootstrap argument are stated in Lemma \ref{lem:energyNormPropertiesMBO}, which was shown in \cite[Section~1]{KochTataru2007}.
\begin{lemma}
\label{lem:energyNormPropertiesMBO}
Suppose that $u \in C([-T,T],H^{\infty}_{\lambda})$ and $u(0) = u_0$. Then, we find the mappings $T^\prime \mapsto \Vert u \Vert_{E^{s,l}_{\lambda}(T^\prime)}$, $T^\prime \mapsto \Vert u \Vert_{F^{s,l}_{\lambda}(T^\prime)}$, $T^\prime \in [0,T)$ to be increasing, continuous, and we have $\limsup_{T^\prime \rightarrow 0} \Vert u \Vert_{E^{s,l}_{\lambda}(T^\prime)} \leq 2 \Vert u_0 \Vert_{H^{s,l}_{\lambda}}$.
\end{lemma}
We are ready to prove Proposition \ref{prop:aPrioriEstimatesSmoothSolutionsMBO}.
\begin{proof}[Proof of Proposition \ref{prop:aPrioriEstimatesSmoothSolutionsMBO}]
In the following we suppose without loss of generality $\int_{\T} u_0 dx = 0$. We start with $\Vert u_0 \Vert_{H^s} \leq \tilde{C}_s \ll 1$. $\tilde{C}_s$ will be specified below, and we shall see how the general case follows from rescaling. Under the smallness assumption on the initial data, by continuity we can invoke Proposition \ref{prop:shorttimeEnergyEstimateMBO} for small times $T^\prime$ and find the following estimates to hold\footnote{Since there are no low frequencies in the present context, the index $l$ is irrelevant and omitted.} from Lemma \ref{lem:ShorttimeEnergyEstimate} and Propositions \ref{prop:shorttimeTrilinearEstimateMBO} and \ref{prop:shorttimeEnergyEstimateMBO}:
\begin{equation*}
\left\{\begin{array}{cl}
\Vert u \Vert_{F^s(T^\prime)} &\leq C_{1,s}  (\Vert u \Vert_{E^s(T^\prime)} + \Vert \partial_x(u^3/3) \Vert_{N^s(T^\prime)}) \\
\Vert \partial_x(u^3/3) \Vert_{N^s(T^\prime)} &\leq C_{2,s} \Vert u \Vert_{F^s(T^\prime)}^3 \\
\Vert u \Vert_{E^s(T^\prime)}^2 &\leq C_{3,s} ( \Vert u_0 \Vert_{H^s}^2 + T^\prime \Vert u \Vert_{F^{s}(T^\prime)}^6 )
\end{array} \right.
\end{equation*}
Following \cite[Section~1]{KochTataru2007}, we set $X(T^\prime) = \Vert u \Vert_{E^s(T^\prime)} + \Vert u \Vert_{F^s(T^\prime)}$ and derive a bound on $X(T^\prime)$ from a continuity argument.\\
Firstly, we find $\limsup_{T^\prime \rightarrow 0} X(T^\prime) \leq 2 \Vert u_0 \Vert_{H^s}$ by Lemma \ref{lem:energyNormPropertiesMBO}. Secondly, we infer from the above estimates that
\begin{equation}
\label{eq:bootstrapBoundMBO}
X(T^\prime) \leq C_s( \Vert u_0 \Vert_{H^s} + X(T^\prime)^3)
\end{equation} 
with $C_s=C_s(C_{1,s},C_{2,s},C_{3,s}) > 1$ for $T^\prime \leq 1$.
From the continuity of $X(T^\prime)$, we have
\begin{equation*}
X(T^\prime) \leq 4C_s \Vert u_0 \Vert_{H^s}
\end{equation*}
for all $T^\prime \in (0,\tilde{T}]$ for some $\tilde{T} \in (0,1]$. However, we find from \eqref{eq:bootstrapBoundMBO} the improvement
\begin{equation*}
X(T^\prime) \leq 2C_s \Vert u_0 \Vert_{H^s}
\end{equation*}
choosing $\tilde{C}_s$ sufficiently small in dependence of $C_s$, e.g. $\tilde{C}_s = (4 C_s)^{-3/2}$.
By a continuity argument, we find
\begin{equation*}
\sup_{t \in [0,1]} \Vert u(t) \Vert_{H^s} \leq 2C_s \Vert u_0 \Vert_{H^s}
\end{equation*}
provided that $\Vert u_0 \Vert_{H^s} \leq \tilde{C}_s$.

Next, we consider the case of initial data large in $H^s$. Fix $0<l<1/4$. We rescale $u_0 \rightarrow \lambda^{-1/2} u_0(\lambda^{-1} \cdot) =: u_0^{\lambda}$ following \eqref{eq:scalingSymmetry}, which also changes the underlying manifold $\T \rightarrow \lambda \T$. For the rescaled initial data, we have $\Vert u_0^\lambda \Vert_{H_{\lambda}^{s,l}} \rightarrow 0$ as $\lambda \rightarrow \infty$.

We have the following set of inequalities for the solutions $u^\lambda$ from \eqref{eq:RescaledSolution}:
\begin{equation*}
\left\{\begin{array}{cl}
\Vert u^{\lambda} \Vert_{F_{\lambda}^{s,l}(T^\prime)} &\leq C_{1,s,l} ( \Vert u^\lambda \Vert_{E_{\lambda}^{s,l}(T^\prime)} + \Vert \partial_x((u^\lambda)^3/3) \Vert_{N_{\lambda}^{s,l}(T^\prime)} \\
\Vert \partial_x((u^\lambda)^3/3) \Vert_{N_{\lambda}^{s,l}(T^\prime)} &\leq C_{2,s,l} \Vert u^\lambda \Vert_{F_{\lambda}^{s,l}(T^\prime)}^3 \\
\Vert u^\lambda \Vert_{E_{\lambda}^{s,l}(T^\prime)}^2 &\leq C_{3,s,l} ( \Vert u^\lambda_0 \Vert_{H_{\lambda}^{s,l}}^2 + T^\prime \Vert u \Vert_{F_{\lambda}^{s,l}(T^\prime)}^6 )
\end{array} \right.
\end{equation*}
By the above means, we find for
\begin{equation*}
X_{\lambda}(T^\prime) := \Vert u^\lambda \Vert_{E^{s,l}_{\lambda}(T^\prime)} + \Vert \partial_x ((u^\lambda)^3/3) \Vert_{N^{s,l}_{\lambda}(T^\prime)} \leq C_{s,l} (\Vert u^\lambda_0 \Vert_{H_{\lambda}^{s,l}}^2 + X_\lambda(T^\prime)^3),
\end{equation*}
which yields
\begin{equation*}
\sup_{t \in [0,1]} \Vert u^\lambda(t) \Vert_{H^{s,l}_{\lambda}} \leq 2 C_{s,l} \Vert u^\lambda_0 \Vert_{H^{s,l}_{\lambda}}
\end{equation*}
provided that $\Vert u_0 \Vert_{H^{s,l}_{\lambda}} \leq \tilde{C}_{s,l} := (4 C_{s,l})^{-3/2}$.
Scaling back, we find the following a priori estimate
\begin{equation*}
\sup_{t \in [0,\lambda^{-2}]} \Vert u(t) \Vert_{H^s} \leq C(s,\Vert u_0 \Vert_{H^s}) \Vert u_0 \Vert_{H^s}
\end{equation*}
to hold.

The dependence of $C$ on $\Vert u_0 \Vert_{H^s}$ comes from an insufficient control over frequencies with size less than $\lambda$ on the unit torus because of the different low frequency weight on the rescaled torus. For these frequencies, we use the estimate due to $L^2$-conservation
\begin{equation*}
\Vert P_{\leq \lambda} u(t) \Vert_{H^s} \lesssim \lambda^s \Vert u_0 \Vert_{L^2} \lesssim \lambda^s \Vert u_0 \Vert_{H^s}.
\end{equation*}
Since we can choose 
\begin{equation*}
\lambda = \left( \frac{\Vert u_0 \Vert_{H^s}}{\tilde{C}_{s,l}} \right)^{1/l},
\end{equation*}
the proof is complete.
\end{proof}
We turn to the proof of existence of solutions. For $u_0 \in H_{\mathbb{R}}^s(\T)$, we denote $u_{0,n} = P_{\leq n} u_0$ for $n \in \N$. Since $u_{0,n} \in H_{\R}^{\infty}(\T)$, there is a sequence of smooth global solutions $(u_n)_{n \in \mathbb{N}}$ to \eqref{eq:mBO} with $u_n(0) = u_{0,n}$, and we have the a priori estimate
\begin{equation*}
\sup_{t \in [0,T_0]} \Vert u_n(t) \Vert_{H^s} \leq C(s,\Vert u_0 \Vert_{H^s}) \Vert u_{0,n} \Vert_{H^s} \leq C(s,\Vert u_0 \Vert_{H^s}) \Vert u_0 \Vert_{H^s}
\end{equation*}
with $T_0$ and $C$ independent of $n$. Next, we prove precompactness of $(u_n)$.
\begin{lemma}
\label{lem:PrecompactnessApproximatingSequenceMBO}
Let $u_0 \in H_{\mathbb{R}}^s(\T)$ for $s>1/4$ and denote by $(u_n)_{n \in \mathbb{N}}$ the sequence of solutions to \eqref{eq:mBO} with $u_n(0) = u_{0,n}$, where $u_{0,n} = P_{\leq n} u_0$. Then, we find the sequence $(u_n)$ to be precompact in $C([-T,T],H^s(\T))$ for $T \leq T_0 = T_0(s,\Vert u_0 \Vert_{H^s})$.
\end{lemma}
\begin{proof}
By the a priori estimate, we have a bound for $\Vert u_n \Vert_{C([-T,T],H^s)}$ uniform in $n$ for $T \leq T_0$. In addition, we prove the following uniform tail estimate as in \cite{GuoOh2018}: for any $\varepsilon > 0$, there is $n_0=n_0(u_0)$ so that we find the estimate
\begin{equation}
\label{eq:uniformTailEstimateMBO}
\Vert P_{\geq n_0} u_n \Vert_{C([-T,T],H^s)} < \varepsilon
\end{equation}
to hold for all $n \in \N$.

This is a consequence of the smoothing effect in the energy estimates from Section \ref{section:EnergyEstimatesMBO}. We consider symbols resembling\footnote{These symbols have to be smoothed out on a scale of size $2^{n_0}$, cf. \cite{GuoOh2018}.}
\begin{equation*}
a(m) = 
\begin{cases}
&\langle m \rangle^{2s}, |m| \geq 2^{n_0} \\
&0, \text{else}
\end{cases}
\end{equation*}
to derive the estimate
\begin{equation*}
\left| \Vert P_{> n_0} u_n \Vert_{E^s(T)}^2 - \Vert P_{> n_0} u_{0,n} \Vert^2_{H^s} \right| \leq C(s,\Vert u_0 \Vert_{H^s}) 2^{-2 \varepsilon n} \rightarrow 0 \text{ as } n_0 \rightarrow \infty.
\end{equation*}
This follows from Proposition \ref{prop:remainderEnergyEstimateMBO}, the embedding $F^{s-\varepsilon}(T) \hookrightarrow L_T^\infty H^{s-\varepsilon}$ and the a priori estimate. Indeed, the choice of symbol implies that there will be two output functions with frequency size at least comparable to $2^{n_0}$. Consequently,
\begin{equation*}
\Vert P_{>n_0} u_n \Vert^2_{C([-T,T],H^s)} \leq \Vert P_{>n_0} u_0 \Vert^2_{H^s} + C(s,\Vert u_0 \Vert_{H^s}) 2^{-2 \varepsilon n_0} \rightarrow 0 \text{ as } n_0 \rightarrow \infty.
\end{equation*}

Hence, it is enough to prove the precompactness of $(P_{\leq n_0} u_n)$ to conclude that of $(u_n)$. From Duhamel's formula and the boundedness of the linear propagator on low frequencies, we find
\begin{equation}
\label{eq:EquicontinuityApproximateSolutionsMBO}
\begin{split}
&\quad \Vert P_{\leq n_0} u_n(t+\delta) - P_{\leq n_0} u_n(t) \Vert_{H^s} \\
&\leq \Vert (e^{i \delta \partial_x^2} - 1) P_{\leq n_0} u_n(t) \Vert_{H^s} + \Vert \int_t^{t+\delta} e^{i(t+\delta-t^\prime) \partial_x^2} P_{\leq n_0} (\partial_x (u_n(t^\prime)^3/3)) dt^\prime \Vert_{H^s} \\
&\lesssim \delta 2^{2n_0} \Vert u_n \Vert_{C([-T,T],H^s)} + 2^{(s+1/2)n_0} \delta \Vert u_n \Vert_{C([-T,T], H^s)}^3 \\
&\lesssim_{n_0,\Vert u_0 \Vert_{H^s}} \delta.
\end{split}
\end{equation}
For the estimate of the first term in the penultimate step, we choose $\delta$ small enough in dependence of $n_0$. For the second term, we use Bernstein's inequality and the Sobolev embedding $H^{1/4} \hookrightarrow L^3$ to write
\begin{equation*}
\begin{split}
\Vert \int_t^{t+\delta} e^{i(t+\delta-t^\prime) \partial_x^2} P_{\leq n_0} (\partial_x (u_n(t^\prime)^3/3)) dt^\prime \Vert_{H^s} &\lesssim 2^{(s+1/2)n_0} \Vert (u_n)^3 \Vert_{L^1([t,t+\delta],L^1)} \\
&\lesssim 2^{(s+1/2)n_0} \delta \Vert u_n \Vert^3_{L_t^\infty L_x^3} \\
&\lesssim 2^{(s+1/2)n_0} \delta \Vert u_n \Vert_{L_t^\infty H^{s}}.
\end{split}
\end{equation*}
The ultimate estimate in \eqref{eq:EquicontinuityApproximateSolutionsMBO} follows also from choosing $\delta$ small enough in dependence of $n_0$ and the a priori estimate.

The equicontinuity of the small frequencies together with the uniform tail estimate \eqref{eq:uniformTailEstimateMBO} implies precompactness by the Arzel\`a-Ascoli criterion. This completes the proof.
\end{proof}
With Proposition \ref{prop:aPrioriEstimatesSmoothSolutionsMBO} and Lemma \ref{lem:PrecompactnessApproximatingSequenceMBO} at disposal, the conclusion of the proof of the main result is an easy consequence of H\"older's inequality and the Sobolev embedding. The details are omitted.

\section{Short-time linear and bilinear Strichartz estimates}
\label{section:shorttimeLinearEstimates}
The building blocks for the short-time nonlinear and energy estimate are linear and bilinear Strichartz estimates. We start with recalling short-time Strichartz estimates for free solutions.

Most of the results are available in the literature for free solutions to the Schr\"o\-din\-ger equation on $\mathbb{T}$. After projecting to negative and positive frequencies and applying the symmetry of motion reversal, we find the estimates also to hold for free periodic solutions to the Benjamin-Ono equation. The case of general period follows from rescaling. Thus, scale invariant estimates are favorable.

Following the heuristic that Schr\"odinger waves localized in frequency with size comparable to $N$ travel with a group velocity proportional to $N$, it is expected that the estimates from Euclidean space remain true on the torus when localized to a time scale of size $N^{-1}$.

Short-time linear Strichartz estimates on compact manifolds were proved in \cite{BurqGerardTzvetkov2004}, which can be stated on $\lambda \T$ as follows:
\begin{proposition}
\label{prop:shorttimeLinearStrichartzEstimate}
Let $\lambda \geq 1$ and $n \in \mathbb{N}_0$. Suppose that $2 \leq q \leq \infty, \; 2 \leq p < \infty$ and $(q,p)$ is Schr\"odinger-admissible, i.e., $\frac{2}{q} + \frac{1}{p} = \frac{1}{2}$ and $u_0 \in L_x^2(\lambda \T)$ with $P_n u_0 = u_0$. Then, we find the following estimate to hold:
\begin{equation}
\label{eq:linearStrichartzEstimate}
\Vert e^{it \partial_x^2} u_0 \Vert_{L_t^q([0,2^{-n}],L_x^p(\lambda \mathbb{T}))} \lesssim_{p,q} \Vert u_0 \Vert_{L^2_x(\lambda \T)}.
\end{equation}
\end{proposition}
\begin{proof}
For $\lambda = 1$ \eqref{eq:linearStrichartzEstimate} is a special case of \cite[Proposition~2.9,~p.~583]{BurqGerardTzvetkov2004}. For general $\lambda$, the claim follows from rescaling.
\end{proof}
This provides us with an epsilon gain in terms of regularity in comparison to the Strichartz estimate for time scales of $\mathcal{O}(1)$:
\begin{equation*}
\Vert e^{it \partial_x^2} u_0 \Vert_{L^6_{t,x}(\T^2)} \leq C_{\varepsilon} N^\varepsilon \Vert u_0 \Vert_{L_x^2(\T)}, \quad \mathrm{supp}(\hat{u}_0) \subseteq [-N,N],
\end{equation*}
which is due to Bourgain (\cite[Proposition~2.36.,~p.~116]{Bourgain1993FourierTransformRestrictionPhenomenaI}).

In Euclidean space, due to the difference in group velocity and global in time dispersive properties, we have the following bilinear Strichartz estimate (cf. \cite[Lemma~111,~p.~270]{Bourgain1998RefinementsStrichartzInequalities} in two dimensions) for $k \leq n-5$:
\begin{equation*}
\Vert P_n e^{it \partial_x^2} u_0 P_k e^{it \partial_x^2} v_0 \Vert_{L_t^2(\R, L_x^2(\R))} \lesssim 2^{-n/2} \Vert P_n u_0 \Vert_{L_x^2(\R)} \Vert P_k v_0 \Vert_{L_x^2(\R)}.
\end{equation*}

After localization in time, we have the following estimate for periodic solutions, which is a special case of \cite[Theorem~1.2.,~p.~343]{Hani2012BilinearOscillatoryIntegralEstimates}:
\begin{proposition}
\label{prop:BilinearPeriodicStrichartzEstimate}
Let $n,k \in \mathbb{N}_0$ with $k \leq n-5$. Suppose that $u_0, \;v_0 \in L_x^2(\lambda \T)$ with $P_n u_0 = u_0$ and $P_k v_0 = v_0$. Then, we find the following estimate to hold:
\begin{equation}
\label{eq:bilinearStrichartzEstimateTorusI}
\Vert e^{it \partial_x^2} u_0 e^{it \partial_x^2} v_0 \Vert_{L_t^2([0,2^{-n}],L^2_x(\lambda \mathbb{T}))} \lesssim 2^{-n/2} \Vert u_0 \Vert_{L^2_x(\lambda \mathbb{T})} \Vert v_0 \Vert_{L^2_x(\lambda \mathbb{T})}.
\end{equation}
\end{proposition}
The estimate
\begin{equation}
\label{eq:bilinearStrichartzEstimateTorusII}
\Vert P_n e^{it \partial_x^2} u_0 \overline{P_k e^{it \partial_x^2} v_0} \Vert_{L_t^2([0,2^{-n}], L^2_x(\lambda \mathbb{T}))} \lesssim 2^{-n/2} \Vert u_0 \Vert_{L^2_x(\lambda \mathbb{T})} \Vert v_0 \Vert_{L^2_x(\lambda \mathbb{T})}
\end{equation}
is also valid and is the rescaled version of \cite[Theorem~2,~p.~120]{MoyuaVega2008}.

In \cite{MoyuaVega2008} is carried out a more precise analysis of bilinear estimates on the torus, also investigating the dependence on the separation of $\text{supp}(\hat{u}_0)$, $\text{supp}(\hat{v}_0)$ and the time-scale. It turns out that it is enough to require 
\begin{equation*}
| |\xi_1| - |\xi_2| | \gtrsim 2^n \text{ for any } \xi_1 \in \text{supp}(\hat{u}_0), \; \xi_2 \in \text{supp}(\hat{v}_0),
\end{equation*}
 and \eqref{eq:bilinearStrichartzEstimateTorusI} and \eqref{eq:bilinearStrichartzEstimateTorusII} remain true. This resembles once more bilinear Strichartz estimates on the real line. We record the following for future use:
\begin{remark}
\label{rem:IntervalSeparation}
Let $I_i \subseteq \mathbb{R}$ be intervals for $i=1,\ldots,4$ and $N,K \in 2^{\mathbb{N}_0}$ with $K \ll N$. Suppose that $\xi_i \in I_i$ satisfy $\xi_1 + \ldots + \xi_4 = 0$ and $|\xi_1| \sim |\xi_2| \sim |\xi_3| \sim N$ and $|\xi_4| \sim K$ whenever $\xi_i \in I_i$, where $i=1,\ldots,4$.

Partition $I_i$ into intervals $(I_i^{(k)})_k$ of length $cN$ for $i=1,2,3$. Then, there is $c$ such that for the intervals $I_i^{(k_i)}$, $i=1,2,3$ with $\xi_i \in I_i^{(k_i)}$ there are $m,n \in \{1,2,3\}$ with $||\xi_m| - |\xi_n|| \gtrsim N$ for any $\xi_m \in I_m^{(k_m)}$ and $\xi_n \in I_n^{(k_n)}$.
\end{remark}
Informally, the observation states that for $\xi_1+\ldots+\xi_4=0$ with $|\xi_1| \sim |\xi_2| \sim |\xi_3| \sim N$, $|\xi_4| \sim K$ and $K \ll N$, there are $\xi_i$ and $\xi_j$ with $i,j \in \{1,2,3\}$ such that $||\xi_i| - |\xi_j|| \gtrsim N$. This will be useful to apply bilinear Strichartz estimates to comparable frequencies.

Furthermore, we record the following refinement for an interaction with very low frequencies involved:
\begin{remark}
\label{rem:BilinearLowFrequencyRefinement}
The bilinear Strichartz estimates remain true when the low frequencies have size smaller than $1$. In case $k \leq 0$ we have the following as a consequence of H\"older's inequality and Bernstein's inequality:
\begin{equation*}
\begin{split}
&\quad \Vert P_n e^{-t \mathcal{H} \partial_{xx}} u_0 P_k e^{-t \mathcal{H} \partial_{xx}} v_0 \Vert_{L_t^2([0,2^{-n}],L^2_x(\lambda \T)} \\
&\lesssim 2^{(k-n)/2} \Vert P_n u_0 \Vert_{L^2_x(\lambda \T)} \Vert P_k v_0 \Vert_{L^2_x(\lambda \mathbb{T})}
\end{split}
\end{equation*}
\end{remark}

Next, the estimates are transferred to short-time function spaces. We start with the Benjamin-Ono case.
\begin{proposition}
\label{prop:StrichartzEstimatesMBO}
Let $m,n \in \mathbb{N}_0$, $m \leq n$ and $u \in U^2_{BO}(I;L^2_x(\lambda \mathbb{T}))$ with $P_n u = u$ and $|I| \lesssim 2^{-n}$. Then, we find the following estimate to hold:
\begin{equation}
\label{eq:LinearStrichartzEstimateMBO}
\Vert u \Vert_{L_t^6(I;L_x^6(\lambda \mathbb{T}))} \lesssim \Vert u \Vert_{U^2_{BO}(I;L^2_x(\lambda \mathbb{T}))}.
\end{equation}

Further, let $v \in U^2_{BO}(I;L^2_x(\lambda \T))$ with $P_m v = v$ such that $2^m \ll 2^n$ or $||\xi_1| - |\xi_2|| \gtrsim 2^n$ whenever $\xi_1 \in \mathrm{supp}_{\xi} (\hat{u})$ and $\xi_2 \in \mathrm{supp}_{\xi} (\hat{v})$. Then, we find the following estimates to hold:
\begin{align}
\label{eq:BilinearStrichartzEstimateMBOU2}
\Vert u v \Vert_{L_t^2(I;L^2_x(\lambda \T))} &\lesssim 2^{-n/2} \Vert u \Vert_{U^2_{BO}(I;L^2_x(\lambda \T))} \Vert v \Vert_{U^2_{BO}(I;L^2_x(\lambda \T))}, \\
\label{eq:BilinearStrichartzEstimateMBOV2}
\Vert u v \Vert_{L_t^2(I;L^2_x(\lambda \T))} &\lesssim n^2 2^{-n/2} \Vert u \Vert_{V^2_{BO}(I;L^2_x(\lambda \T))} \Vert v \Vert_{V^2_{BO}(I;L^2_x(\lambda \T))}.
\end{align}
For the latter estimate, it is enough to assume $u,v \in V^2_{BO}(I;L^2_x(\lambda \T))$.
\end{proposition}
\begin{proof}
It is enough to verify the claims for $\lambda=1$ as the general case follows from rescaling. \eqref{eq:LinearStrichartzEstimateMBO} is a consequence of Proposition \ref{prop:shorttimeLinearStrichartzEstimate} and the transfer principle after considering positive and negative frequencies separately.

\eqref{eq:BilinearStrichartzEstimateMBOU2} follows from the transfer principle and the short-time bilinear Strichartz estimate from Proposition \ref{prop:BilinearPeriodicStrichartzEstimate}, see also the subsequent remark.

\eqref{eq:BilinearStrichartzEstimateMBOV2} follows from interpolating \eqref{eq:BilinearStrichartzEstimateMBOU2} with linear estimates, see Property (iv) from Lemma \ref{lem:UpVpFunctionSpaceProperties}.
\end{proof}
We record the corresponding estimates in case of Schr\"odinger interaction, which follow like in the previous proposition:
\begin{proposition}
\label{prop:StrichartzEstimatesSEQ}
Let $m,n \in \mathbb{N}_0$ and $m \leq n$. Suppose that $u \in U^2_{\Delta}(I;L^2_x(\lambda \T))$ with $P_n u = u$ and $|I| \lesssim 2^{-n}$. Then, we find the following estimate to hold:
\begin{equation*}
\Vert u \Vert_{L_t^6(I;L_x^6(\lambda \T))} \lesssim \Vert u \Vert_{U^2_{\Delta}(I;L^2_x(\lambda \T))}.
\end{equation*}

Further, let $v \in U^2_{\Delta}(I;L^2_x(\lambda \T))$ with $P_m v = v$ such that $2^m \ll 2^n$ or $||\xi_1| - |\xi_2|| \gtrsim 2^n$ whenever $\xi_1 \in \mathrm{supp}_{\xi} (\hat{u})$ and $\xi_2 \in \mathrm{supp}_{\xi} (\hat{v})$. Then, we find the following estimates to hold:
\begin{align}
\label{eq:BilinearStrichartzEstimateSEQU2}
\Vert u v \Vert_{L_t^2(I;L^2_x(\lambda \T))} &\lesssim 2^{-n/2} \Vert u \Vert_{U^2_{\Delta}(I;L^2_x(\lambda \T))} \Vert v \Vert_{U^2_{\Delta}(I;L^2_x(\lambda \T))}, \\
\label{eq:BilinearStrichartzEstimateSEQV2}
\Vert u v \Vert_{L_t^2(I;L^2_x(\lambda \T))} &\lesssim n^2 2^{-n/2} \Vert u \Vert_{V^2_{\Delta}(I;L^2_x(\lambda \T))} \Vert v \Vert_{V^2_{\Delta}(I;L^2_x(\lambda \T))}.
\end{align}

The estimates remain true after replacing $v$ by $\overline{v}$. For the latter estimate, it is enough to assume $u,v \in V^2_{\Delta}(I;L^2_x(\lambda \T))$.
\end{proposition}
Remark \ref{rem:IntervalSeparation} implies that in case of a $High \times High \times High \times Low$-interaction the functions are still amenable to two bilinear estimates after partitioning the Fourier support into finitely many subintervals.
\section{Short-time trilinear estimates}
\label{section:ShorttimeTrilinearEstimateMBO}
The aim of this section is to derive a short-time trilinear estimate for the nonlinear interaction in \eqref{eq:mBO}. In this section the short-time function spaces are adapted to the dispersion relation $\omega(\xi) = -\xi |\xi|$.
\begin{proposition}
\label{prop:shorttimeTrilinearEstimateMBO}
Let $\lambda \geq 1$ and suppose that $0<l<1/4<s$, $T \in (0,1]$ and $u,v,w \in F_{\lambda}^{s,l}(T)$. Then, we find the following estimate to hold:
\begin{equation}
\label{eq:ShorttimeTrilinearEstimateMBO}
\Vert \partial_x (u v w) \Vert_{N^{s,l}_{\lambda}(T)} \lesssim \Vert u \Vert_{F^{s,l}_{\lambda}(T)} \Vert v \Vert_{F^{s,l}_{\lambda}(T)} \Vert w \Vert_{F^{s,l}_{\lambda}(T)}.
\end{equation}
\end{proposition}
We perform decompositions with respect to frequency, essentially reducing the estimate \eqref{eq:ShorttimeTrilinearEstimateMBO} from above to 
\begin{equation}
\label{eq:frequencyLocalizedShorttimeTrilinearEstimateMBO}
\Vert P_{k_4} \partial_x(u_{k_1} v_{k_2} w_{k_3}) \Vert_{N_{k_4,\lambda}} \lesssim \underbrace{\alpha(k_1,k_2,k_3,k_4)}_{\alpha(\underline{k})} \Vert u_{k_1} \Vert_{F_{k_1,\lambda}} \Vert v_{k_2} \Vert_{F_{k_2,\lambda}} \Vert w_{k_3} \Vert_{F_{k_3,\lambda}}.
\end{equation}
For the remainder of this section, the $k_i, \; i=1,\ldots,4$ denote integers and we assume $\text{supp}_{\xi}(\hat{u}_{k_1}) \subseteq I_{k_1}$, $\text{supp}_{\xi}(\hat{v}_{k_2}) \subseteq I_{k_2}$ and similarly for $w_{k_3}$.

We prove \eqref{eq:frequencyLocalizedShorttimeTrilinearEstimateMBO} using the estimates from Proposition \ref{prop:StrichartzEstimatesMBO}. To structure the proof, we list each possible frequency interaction. In any case, we find estimate \eqref{eq:ShorttimeTrilinearEstimateMBO} to hold for regularities $0<l<1/4<s$.
\begin{enumerate}
\item[(i)] $High \times Low \times Low \rightarrow High$-interaction: This interaction will be estimated in Lemma \ref{lem:HighLowLowHighInteractionMBO}.
\item[(ii)] $High \times High \times Low \rightarrow High$-interaction: This interaction will be estimated in Lemma \ref{lem:HighHighLowHighInteractionMBO}.
\item[(iii)] $High \times High \times High \rightarrow High$-interaction: This interaction will be estimated in Lemma \ref{lem:HighHighHighHighInteractionMBO}.
\item[(iv)] $High \times High \times Low \rightarrow Low$-interaction: This interaction will be estimated in Lemma \ref{lem:HighHighLowLowInteractionMBO}.
\item[(v)] $High \times High \times High \rightarrow Low$-interaction: This interaction will be estimated in Lemma \ref{lem:HighHighHighLowInteractionMBO}.
\item[(vi)] $Low \times Low \times Low \rightarrow Low$-interaction: This interaction will be estimated in Lemma \ref{lem:LowLowLowLowInteractionMBO}.
\end{enumerate}
We start with $High \times Low \times Low \rightarrow High $-interaction. The below computation makes the heuristic argument \eqref{eq:ControlDerivativeLoss} from the Introduction precise.
\begin{lemma}
\label{lem:HighLowLowHighInteractionMBO}
Suppose that $k_4 \geq 20$ and $k_1 \leq k_2 \leq k_3 -5$. Then, we find \eqref{eq:frequencyLocalizedShorttimeTrilinearEstimateMBO} to hold with $\alpha(\underline{k}) =2^{k_1/2} \min(1,2^{k_2/2})$.
\end{lemma}
\begin{proof}
We use the embedding $L^1(I) \hookrightarrow DU^2_{BO}(I)$ and H\"older in time to find for $|I| \lesssim 2^{-k_4}$
\begin{equation*}
\begin{split}
&\quad \Vert P_{k_4} \partial_x (u_{k_1} v_{k_2} w_{k_3}) \Vert_{DU^2_{BO}(I;L^2_x(\lambda \T))} \\
&\lesssim \Vert \partial_x (u_{k_1} v_{k_2} w_{k_3}) \Vert_{L_t^1(I;L^2_x(\lambda \T))} \\
&\lesssim 2^{k_4/2} \Vert u_{k_1} v_{k_2} w_{k_3} \Vert_{L_t^2(I;L^2_x(\lambda \T))} \\
&\lesssim 2^{k_4/2} \Vert u_{k_1} \Vert_{L_t^\infty(I;L_x^\infty(\lambda \T))} \Vert v_{k_2} w_{k_3} \Vert_{L_t^2(I;L^2_x(\lambda \T))} \\
&\lesssim 2^{k_1/2} \min(1,2^{k_2/2}) \Vert u_{k_1} \Vert_{U^2_{BO}(I;L^2_x(\lambda \T))} \Vert v_{k_2} \Vert_{U^2_{BO}(I;L^2_x(\lambda \T))} \Vert w_{k_3} \Vert_{U^2_{BO}(I;L^2_x( \lambda \T))}.
\end{split}
\end{equation*}

The ultimate estimate follows from \eqref{eq:BilinearStrichartzEstimateMBOU2}, Bernstein's inequality and Remark \ref{rem:BilinearLowFrequencyRefinement}. The claim follows from the definition of the function spaces.
\end{proof}
Next, $High \times High \times Low \rightarrow High$-interaction is considered:
\begin{lemma}
\label{lem:HighHighLowHighInteractionMBO}
Suppose that $k_4 \geq 20$, $ k_1 \leq k_2 \leq k_3$ and $k_1 \leq k_2-20$, $|k_2-k_4| \leq 10$. Then, \eqref{eq:frequencyLocalizedShorttimeTrilinearEstimateMBO} holds with $\alpha(\underline{k}) = 2^{(0+)k_4} \min(1,2^{k_1/2})$.
\end{lemma}
\begin{proof}
Let $I$ be an interval with $|I| \lesssim 2^{-k_4}$. We use duality to write
\begin{equation}
\label{eq:DualityHighHighLowHighInteractionMBO}
\Vert P_{k_4} \partial_x (u_{k_1} v_{k_2} w_{k_3}) \Vert_{DU^2_{BO}(I;L^2_x(\lambda \T))} \lesssim 2^{k_4} \sup_{\Vert f \Vert_{V^2_{BO}}=1} \int_I \int_{\lambda \T} P_{k_4} f u_{k_1} v_{k_2} w_{k_3} dx dt.
\end{equation}
Among $P_{k_4} f$, $v_{k_2}$, $w_{k_3}$, there is a pair amenable to a bilinear Strichartz estimate following Remark \ref{rem:IntervalSeparation}. Say this is $P_{k_4} f w_{k_3}$. Then, we find from \eqref{eq:BilinearStrichartzEstimateMBOU2} and \eqref{eq:BilinearStrichartzEstimateMBOV2} the following
\begin{equation*}
\begin{split}
\eqref{eq:DualityHighHighLowHighInteractionMBO} &\lesssim 2^{k_4} \Vert P_{k_4} f w_{k_3} \Vert_{L_t^2(I;L^2_x(\lambda \T))} \Vert u_{k_1} v_{k_2} \Vert_{L_t^2(I;L^2_x(\lambda \T))} \\
&\lesssim k_4^2 \min(1,2^{k_1/2}) \left( \sup_{\Vert f \Vert_{V^2_{BO}} = 1} \Vert P_{k_4} f \Vert_{V^2_{BO}} \right) \Vert w_{k_3} \Vert_{V^2_{BO}(I;L^2_x(\lambda \T))} \\
&\quad \quad \times \Vert u_{k_1} \Vert_{U^2_{BO}(I;L^2_x(\lambda \T))} \Vert v_{k_2} \Vert_{U^2_{BO}(I;L^2_x(\lambda \T))} \\
&\lesssim 2^{(0+)k_4} \min(1,2^{k_1/2}) \Vert u_{k_1} \Vert_{U^2_{BO}(I;L^2_x(\lambda \T))} \\
&\quad \quad \times \Vert v_{k_2} \Vert_{U^2_{BO}(I;L^2_x(\lambda \T))} \Vert w_{k_3} \Vert_{U^2_{BO}(I;L^2_x(\lambda \T))},
\end{split}
\end{equation*}
where the ultimate step follows from the embedding properties of $U^p$-/$V^p$-spaces. The claim follows from the definition of the function spaces.
\end{proof}
We estimate the interaction, which leads to the $s=1/2$-threshold of local well-posedness with uniformly continuous dependence on initial data, namely $High \times High \times High \rightarrow High$-interaction:
\begin{lemma}
\label{lem:HighHighHighHighInteractionMBO}
Suppose that $k_4 \geq 20$, $k_1 \leq k_2 \leq k_3$ and $|k_i - k_4| \leq 30$ for any $i \in \{1,2,3\}$. Then, \eqref{eq:frequencyLocalizedShorttimeTrilinearEstimateMBO} holds with $\alpha(\underline{k}) = 2^{k_4/2}$.
\end{lemma}
\begin{proof}
We use the embedding $L^1(I) \hookrightarrow DU_{BO}^2(I)$, H\"older in time and \eqref{eq:LinearStrichartzEstimateMBO} to find for $|I| \lesssim 2^{-k_4}$
\begin{equation*}
\begin{split}
&\quad \Vert P_{k_4} \partial_x (u_{k_1} v_{k_2} w_{k_3}) \Vert_{DU^2_{BO}(I;L^2_x(\lambda \T))} \\
&\lesssim 2^{k_4} \Vert u_{k_1} v_{k_2} w_{k_3} \Vert_{L_t^1(I;L^2_x(\lambda \T))} \\
&\lesssim 2^{k_4/2} \Vert u_{k_1} v_{k_2} w_{k_3} \Vert_{L_t^2(I;L^2_x(\lambda \T))} \\
&\lesssim 2^{k_4/2} \Vert u_{k_1} \Vert_{L_t^6(I;L_x^6(\lambda \T))} \Vert v_{k_2} \Vert_{L_t^6(I;L_x^6(\lambda \T))} \Vert w_{k_3} \Vert_{L_t^6(I;L_x^6(\lambda \T))} \\
&\lesssim 2^{k_4/2} \Vert u_{k_1} \Vert_{U^2_{BO}(I;L^2_x(\lambda \T))} \Vert v_{k_2} \Vert_{U^2_{BO}(I;L^2_x(\lambda \T))} \Vert w_{k_3} \Vert_{U^2_{BO}(I;L^2_x(\lambda \T))},
\end{split}
\end{equation*}
which yields the claim.
\end{proof}
In the following interactions one input frequency is significantly larger than the output frequency. This requires to add localization in time for an estimate in short-time spaces. We consider the contribution from $High \times High \times Low \rightarrow Low$-interaction.
\begin{lemma}
\label{lem:HighHighLowLowInteractionMBO}
Suppose that $k_3 \geq 20$, $k_1 \leq k_2 \leq k_3$, $k_1 \leq k_2 - 10$ and $k_4 \leq k_2-10$. Then, \eqref{eq:frequencyLocalizedShorttimeTrilinearEstimateMBO} holds with $\alpha(\underline{k}) = 2^{(0+)k_3} \min(1,2^{k_1/2}) \min(1,2^{k_4/2})$.
\end{lemma}
\begin{proof}
We use duality to write for $|I| \lesssim 2^{-k_4}$
\begin{equation}
\label{eq:DualityHighHighLowLowInteractionMBO}
\Vert P_{k_4} \partial_x (u_{k_1} v_{k_2} w_{k_3}) \Vert_{DU^2_{BO}(I;L^2_x(\lambda \T))} \lesssim 2^{k_4} \sup_{\Vert f \Vert_{V^2_{BO}} = 1} \int_I \int_{\lambda \T} P_{k_4} f u_{k_1} v_{k_2} w_{k_3} dx dt .
\end{equation}
To estimate $u_{k_1}$ and $v_{k_2}$ in short-time spaces, we have to divide up $I$ into intervals $J$ with $|J| \lesssim 2^{-k_3}$ and write
\begin{equation*}
\begin{split}
\eqref{eq:DualityHighHighLowLowInteractionMBO} &\lesssim \sum_{\substack{J \subseteq I,\\ |J| \lesssim 2^{-k_3}}} 2^{k_4} \sup_{\Vert f \Vert_{V^2_{BO}} = 1} \int_J \int_{\lambda \T} P_{k_4} f u_{k_1} v_{k_2} w_{k_3} dx dt \\
&\lesssim 2^{k_4} \sum_{J \subseteq I} \sup_{\Vert f \Vert_{V^2_{BO}} = 1} \Vert P_{k_4} f v_{k_2} \Vert_{L^2_t(J;L^2_x(\lambda \T))} \Vert u_{k_1} w_{k_3} \Vert_{L^2_t(J;L^2_x(\lambda \T))} \\
&\lesssim 2^{k_4} \min(1,2^{k_1/2}) \min(1,2^{k_4/2}) \sum_{J \subseteq I} k_3^2 2^{-k_3/2} \Vert v_{k_2} \Vert_{U^2_{BO}(J;L^2_x(\lambda \T))} \\
&\quad \quad \times 2^{-k_3/2} \Vert u_{k_1} \Vert_{U^2_{BO}(J;L_x^2(\lambda \T))} \Vert w_{k_3} \Vert_{U^2_{BO}(J;L_x^2(\lambda \T))} \\
&\lesssim 2^{(0+)k_3} \min(1,2^{k_1/2}) \min(1,2^{k_4/2}) \\
&\quad \quad \times \sup_{\substack{J \subseteq I,\\ |J| \lesssim 2^{-k_3}}} \left( \Vert u_{k_1} \Vert_{U^2_{BO}(J;L_x^2(\lambda \T))} \Vert v_{k_2} \Vert_{U^2_{BO}(J;L_x^2(\lambda \T))} \Vert w_{k_3} \Vert_{U^2_{BO}(J;L_x^2(\lambda \T))} \right),
\end{split}
\end{equation*}
where the penultimate estimate follows from \eqref{eq:BilinearStrichartzEstimateMBOU2},  \eqref{eq:BilinearStrichartzEstimateMBOV2} and Remark \ref{rem:BilinearLowFrequencyRefinement}. The ultimate estimate follows from partitioning $I$ with intervals of length $2^{-k_3}$ giving a factor of $2^{k_3-k_4}$ in case $k_4 \geq 0$. For $k_4<0$ the estimate improves. The claim follows from the definition of the function spaces.
\end{proof}
Next, we deal with $High \times High \times High \rightarrow Low$-interaction:
\begin{lemma}
\label{lem:HighHighHighLowInteractionMBO}
Suppose that $k_3 \geq 20$, $k_1 \leq k_2 \leq k_3$, $k_4 \leq k_1-10$ and $k_3-k_1 \leq 10$. Then, we find \eqref{eq:frequencyLocalizedShorttimeTrilinearEstimateMBO} to hold with $\alpha(\underline{k}) = 2^{(0+)k_1} \min(1,2^{k_4/2})$.
\end{lemma}
\begin{proof}
By the above argument, we write
\begin{equation}
\label{eq:DualityHighHighHighLowInteractionMBO}
\begin{split}
&\quad \Vert P_{k_4} \partial_x (u_{k_1} v_{k_2} w_{k_3}) \Vert_{DU^2_{BO}(I;L^2(\lambda \T))} \\
&\lesssim 2^{k_4} \sum_{\substack{J \subseteq I,\\ |J| \lesssim 2^{-k_4}}} \sup_{\Vert f \Vert_{V^2_{BO}} = 1} \int_J \int_{\lambda \T} P_{k_4} f u_{k_1} v_{k_2} w_{k_3} dx dt.
\end{split}
\end{equation}

Now we observe following Remark \ref{rem:IntervalSeparation} that among the high frequencies there must be one pair say $u_{k_1}$, $v_{k_2}$ with $\left| |\xi_1| - |\xi_2| \right| \gtrsim 2^{k_1}$ for $\xi_1 \in \text{supp}_{\xi} ( \hat{u}_{k_1})$ and $\xi_2 \in \text{supp}_{\xi} (\hat{v}_{k_2})$ after partitioning the support of the input functions into finitely many subintervals.

To this pair, we can apply a bilinear Strichartz estimate from Proposition \ref{prop:StrichartzEstimatesMBO} to find
\begin{equation*}
\begin{split}
\eqref{eq:DualityHighHighHighLowInteractionMBO} &\lesssim 2^{k_4} \sum_{\substack{J \subseteq I, \\ |J| = 2^{-k_1}}} \sup_{\Vert f \Vert_{V^2_{BO}} = 1} \Vert P_{k_4} f w_{k_3} \Vert_{L_t^2(J;L_x^2(\lambda \T))} \Vert u_{k_1} v_{k_2} \Vert_{L_t^2(J;L_x^2(\lambda \T))} \\
&\lesssim 2^{k_4} \min(1,2^{k_4/2}) \sum_{J \subseteq I} k_1^2 2^{-k_1/2} \Vert w_{k_3} \Vert_{U_{BO}^2(J;L_x^2(\lambda \T))} \\
&\quad \quad \times 2^{-k_1/2} \Vert u_{k_1} \Vert_{U^2_{BO}(J;L_x^2(\lambda \T))} \Vert v_{k_2} \Vert_{U^2_{BO}(J;L_x^2(\lambda \T))} \\
&\lesssim 2^{(0+)k_1} \min(1,2^{k_4/2}) \\
&\quad \quad \times \sup_{J \subseteq I} \left( \Vert u_{k_1} \Vert_{U^2_{BO}(J;L_x^2(\lambda \T))} \Vert v_{k_2} \Vert_{U^2_{BO}(J;L_x^2(\lambda \T))} \Vert w_{k_3} \Vert_{U^2_{BO}(J;L_x^2(\lambda \T))} \right).
\end{split}
\end{equation*}
The claim follows from the definition of the function spaces.
\end{proof}
At last, we record the $Low \times Low \times Low \rightarrow Low$-estimate, which is straight-forward from H\"older's and Bernstein's inequality.
\begin{lemma}
\label{lem:LowLowLowLowInteractionMBO}
Suppose that $\max_{i=1,2,3,4} k_i \leq 200$ and $k_1 \leq k_2 \leq k_3$. Then, we find estimate \eqref{eq:frequencyLocalizedShorttimeTrilinearEstimateMBO} to hold with $\alpha(\underline{k})=2^{k_1/2} 2^{k_2/2} 2^{k_4}$.
\end{lemma}
\section{Energy estimates}
\label{section:EnergyEstimatesMBO}
In the following the energy norm is propagated in terms of the short-time $U^2$-norms. We shall show the estimate
\begin{equation}
\label{eq:shorttimeEnergyEstimateMBO}
\Vert u \Vert_{E_{\lambda}^{s,l}(T)}^2 \lesssim_{s,l} \Vert u_0 \Vert_{H_{\lambda}^{s,l}}^2 + T \Vert u \Vert_{F_{\lambda}^{s-\tilde{\varepsilon},l}(T)}^6
\end{equation}
for $0<l<1/4<s$, small enough $\sup_{t \in [-T,T]} \Vert u(t) \Vert_{H^{s,l}_{\lambda}}$ and $\tilde{\varepsilon} = \tilde{\varepsilon}(s)>0$. The estimate will be independent of $\lambda$. A similar estimate was proved on the real line in \cite[Proposition~8.1.,~p.~1124]{Guo2011MBO}.
\begin{proposition}
\label{prop:shorttimeEnergyEstimateMBO}
Let $\lambda \geq 1$, $T \in (0,1]$ and $u \in C([-T,T],H_{\lambda}^{\infty})$ be a real-valued solution to \eqref{eq:RescaledSolution}. Then, for $0<l<1/4<s$, there exists $\tilde{\varepsilon}(s)>0$ and $\delta(s)>0$ such that we find \eqref{eq:shorttimeEnergyEstimateMBO} to hold provided that 
\begin{equation*}
\sup_{t \in [-T,T]} \Vert u(t) \Vert_{H^{s,l}_{\lambda}} \leq \delta(s).
\end{equation*}
\end{proposition}
To prove Proposition \ref{prop:shorttimeEnergyEstimateMBO}, we employ a variant of the $I$-method (cf. \cite{CollianderKeelStaffilaniTakaokaTao2002,CollianderKeelStaffilaniTakaokaTao2003}).\\
Symmetrized energy quantities are considered, which come into play after integration by parts in the time variable. In the context of short-time norms, this strategy was previously followed in \cite{KochTataru2007,KochTataru2012}.

The following analysis is close to the arguments on the real line from \cite{Guo2011MBO}. In fact, we see from the proof that one can treat the Euclidean and periodic case simultaneously. However, we prefer to use multilinear estimates over linear estimates as was done in \cite{Guo2011MBO}.

We also make use of the following definition from \cite{KochTataru2007}:
\begin{definition}
\label{def:FrequencyEnvelopeSymbols}
Let $\varepsilon > 0$ and $s \in \mathbb{R}$. Then $S^s_{\varepsilon}$ is the set of positively real-valued, symmetric and smooth functions on the real line (symbols) with the following properties:
\begin{enumerate}
\item[(i)] Slowly varying and support condition: There is $\xi_0 \in 2^{\mathbb{N}_0}$ such that for $\xi \sim \xi^\prime \gtrsim \xi_0$ we have
\begin{align*}
a(\xi) &\sim a(\xi^\prime), \\
a(\xi) &= 0 \text{ for } |\xi| \leq \xi_0/2,
\end{align*}
\item[(ii)] symbol regularity,
\begin{equation*}
\forall \alpha \in \mathbb{N}_0: |\partial^{\alpha} a(\xi)| \lesssim_{\alpha} a(\xi) (1+\xi^2)^{-\alpha/2},
\end{equation*}
\item[(iii)] growth at infinity, for $|\xi| \gg \xi_0$ we have
\begin{equation*}
s-\varepsilon \leq \frac{\log a(\xi)}{\log(1+\xi^2)} \leq s+ \varepsilon.
\end{equation*}
\end{enumerate}
\end{definition}
Note that since $a$ and expressions involving $a$ act as Fourier multiplier for $2 \pi\lambda$-periodic functions, the actually relevant domain of $a$ is $\mathbb{Z}/\lambda$. To derive favorable pointwise estimates, extended versions to the real line are used. Furthermore, if we only wanted to control the $H_\lambda^s$-norm of $u$, then we would only have to take into account the symbols $a(\xi)=(1+\xi^2)^s$, and by the support condition, we emphasize that only high frequencies are analyzed as the estimate of low frequencies is immediate from the definition of $E_{\lambda}^{s,l}(T)$.

Since we have to derive estimates uniform in time, we have to allow a slightly larger class of symbols following \cite{KochTataru2007}. This makes up for the difference between $E^{s,l}_{\lambda}(T)$ and $C([0,T],H^{s,l}_{\lambda})$. The proof of Proposition \ref{prop:shorttimeEnergyEstimateMBO} is concluded choosing symbols, which allow us to derive suitable estimates for frequency localized energies. For most of the time, we have $\xi_0=1$. To consider only high input frequencies of size $\gtrsim \xi_0$ will be important when constructing solutions (cf. Lemma \ref{lem:PrecompactnessApproximatingSequenceMBO}), as the estimates yield a smoothing factor.

To derive \eqref{eq:shorttimeEnergyEstimateMBO}, we analyze the following generalized energy $E_0^{a,\lambda}$ for a smooth, real-valued solution to \eqref{eq:mBO}:
\begin{equation*}
E^{a,\lambda}_0(u) = \int_{\xi_1 + \xi_2 = 0} a(\xi_1) \hat{u}(\xi_1) \hat{u}(\xi_2) d \Gamma_2^{\lambda} \; \left( := \frac{1}{\lambda} \sum_{\xi_1 \in \Z/\lambda} a(\xi_1) \hat{u}(\xi_1) \hat{u}(-\xi_1) \right).
\end{equation*}

The following symmetrization and integration by parts arguments can be found almost verbatim in \cite{Guo2011MBO} with the difference that the computations in \cite{Guo2011MBO} were carried out for a continuous frequency range.

We use the following notation for the $d-1$-dimensional grid in $d$-dimensional space:
\begin{equation*}
\Gamma^{\lambda}_d = \{ \xi_1 + \xi_2 + \ldots + \xi_d = 0 \; | \; \xi_i \in \Z/\lambda \},
\end{equation*}
and the measure is given as follows:
\begin{equation*}
\int_{\Gamma_d^\lambda} f(\xi_1) \ldots f(\xi_d) d\Gamma^\lambda_d(\xi_1,\ldots,\xi_d) = \frac{1}{\lambda^{d-1}} \sum_{\substack{\xi_1+\ldots+\xi_d = 0,\\ \xi_i \in \mathbb{Z}/\lambda}} f(\xi_1) \ldots f(\xi_d).
\end{equation*}

We find for the derivative of $E^{a,\lambda}_0(u(t))$ after symmetrization
\begin{equation*}
\begin{split}
\frac{d}{dt} E^{a,\lambda}_0(u) &= R^{a,\lambda}_4(u) \\
&=\frac{1}{2} \int_{\Gamma^\lambda_4} i[\xi_1 a(\xi_1) + \xi_2 a(\xi_2) + \xi_3 a(\xi_3) + \xi_4 a(\xi_4) ] \prod_{i=1}^4 \hat{u}(\xi_i) d\Gamma_4^{\lambda}.
\end{split}
\end{equation*}
Here, we have fixed the sign of the nonlinearity in \eqref{eq:mBO} for definiteness. From the below arguments follows that the sign is not relevant. Moreover, the symmetrization argument fails for differences of solutions. This leads to the well-known breakdown of uniform continuity of the data-to-solution mapping below $H^{1/2}$.

Next, we consider the correction term
\begin{equation*}
E^{a,\lambda}_1(u) = \int_{\Gamma^{\lambda}_4} b^{a}_4(\xi_1,\xi_2,\xi_3,\xi_4) \hat{u}(\xi_1) \hat{u}(\xi_2) \hat{u}(\xi_3) \hat{u}(\xi_4) d\Gamma_4^\lambda,
\end{equation*}
where we require the multiplier $b^a_4$ to satisfy the following identity on $\Gamma^\lambda_4$:
\begin{equation*}
 \left( \sum_{i=1}^4 \omega(\xi_i) \right) b^a_4(\xi_1,\xi_2,\xi_3,\xi_4) = \frac{-i}{2} \left( \sum_{i=1}^4 \xi_i a(\xi_i) \right).
\end{equation*}
Here, $\omega(\xi)=-\xi |\xi|$ denotes the dispersion relation of the Benjamin-Ono equation and the left-hand side of the above display corresponds to the contribution from the linear part of \eqref{eq:RescaledSolution} when replacing $\partial_t u$ in $\frac{d}{dt} E_1^{a,\lambda}$. This achieves a cancellation considering
\begin{equation*}
\begin{split}
R^{a,\lambda}_6(u) &= \frac{d}{dt} (E^{a,\lambda}_0(u) + E^{a,\lambda}_1(u)) \\
&= C \int_{\Gamma^\lambda_6} b^a_4(\xi_1,\xi_2,\xi_3,\xi_4+\xi_5+\xi_6)(\xi_4+\xi_5+\xi_6) \prod_{i=1}^6 \hat{u}(\xi_i).
\end{split}
\end{equation*}

We have the following proposition on choosing the multiplier $b^a_4$ smooth and extending it off diagonal, which allows us to separate variables easier later on. For the proof we follow ideas from  \cite{KochTataru2012} and \cite{ChristHolmerTataru2012}.
\begin{proposition}
\label{prop:energyMultiplierBounds}
Let $a \in S^s_{\varepsilon}$. Then, for each dyadic $\nu \leq \beta \leq \mu$, there is an extension $\tilde{b}^a_4$ of $b_4^a$ from the diagonal set
\begin{equation*}
\{ (\xi_1,\xi_2,\xi_3,\xi_4) \in \Gamma^{\lambda}_4: |\xi_1| \sim \nu, |\xi_2| \sim \beta, |\xi_3|,|\xi_4| \sim \mu \}
\end{equation*}
to the full dyadic set
\begin{equation*}
\{ (\xi_1,\xi_2,\xi_3,\xi_4) \in \mathbb{R}^4: |\xi_1| \sim \nu, |\xi_2| \sim \beta, |\xi_3|,|\xi_4| \sim \mu \},
\end{equation*}
which satisfies
\begin{equation*}
|\tilde{b}^a_4(\xi_1,\xi_2,\xi_3,\xi_4)| \lesssim a(\mu) \mu^{-1}
\end{equation*}
and
\begin{equation*}
| \partial_1^{\alpha_1} \partial_2^{\alpha_2} \partial_3^{\alpha_3} \partial_4^{\alpha_4} \tilde{b}^a_4(\xi_1,\xi_2,\xi_3,\xi_4)| \lesssim_{\alpha} a(\mu) \mu^{-1} \nu^{-\alpha_1} \beta^{-\alpha_2} \mu^{-(\alpha_3+\alpha_4)}.
\end{equation*}
with implicit constant depending on $\alpha$, but not on $\nu,\beta,\mu$.
\end{proposition}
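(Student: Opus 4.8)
The plan is to read $b^a_4$ off its defining relation as the quotient
\[
b^a_4(\xi_1,\xi_2,\xi_3,\xi_4)=\frac{-i}{2}\cdot\frac{\xi_1 a(\xi_1)+\xi_2 a(\xi_2)+\xi_3 a(\xi_3)+\xi_4 a(\xi_4)}{\omega(\xi_1)+\omega(\xi_2)+\omega(\xi_3)+\omega(\xi_4)}
\]
on $\Gamma_4^\lambda$, with $\omega(\xi)=-\xi|\xi|$, and to extract size and regularity from the algebraic structure of numerator $N$ and denominator $H:=\sum_i\omega(\xi_i)$. Writing $\tilde a(\xi)=a(\xi)/|\xi|$, which inherits the slowly-varying and symbol-regularity bounds of $a$, one has $\xi a(\xi)=-\omega(\xi)\tilde a(\xi)$, so $b^a_4=\tfrac{i}{2}\bigl(\sum_i\omega(\xi_i)\tilde a(\xi_i)\bigr)/\bigl(\sum_i\omega(\xi_i)\bigr)$ is a weighted combination of the values $\tilde a(\xi_i)$. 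Since $\tilde a(\xi_3),\tilde a(\xi_4)\sim a(\mu)\mu^{-1}$, the asserted size $a(\mu)\mu^{-1}$ is the expected one; the content of the proof is that it survives the large individual weights $\omega(\xi_i)/H$ that arise when $H$ is small.

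\textbf{Resonance function and numerator cancellation.} First I would analyze $H$ on $\Gamma_4^\lambda\cap\{|\xi_1|\sim\lambda,\ |\xi_2|\sim\beta,\ |\xi_3|,|\xi_4|\sim\mu\}$, distinguishing the sign patterns of the $\xi_i$. Because $\omega$ is piecewise quadratic, substituting $\xi_3+\xi_4=-(\xi_1+\xi_2)$ and Taylor expanding terminates exactly; in the delicate regime where $\xi_1$ nearly cancels $\xi_2$ and $\xi_3$ nearly cancels $\xi_4$, writing $p=\xi_1+\xi_2=-(\xi_3+\xi_4)$, one obtains
\[
H=2\bigl(|\xi_4|-|\xi_2|\bigr)p+\bigl(\mathrm{sgn}(\xi_2)+\mathrm{sgn}(\xi_4)\bigr)p^2 ,
\]
and comparable expressions in the remaining configurations. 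This yields $|H|\gtrsim\mu\,|\xi_1+\xi_2|$ away from the deepest resonance, $|H|\gtrsim\mu\beta$ when $\xi_1,\xi_2$ do not nearly cancel, and exact vanishing $H=0$ precisely when $\xi_1=-\xi_2$ and $\xi_3=-\xi_4$. Simultaneously $N=\sum_i\xi_i a(\xi_i)$, being a sum of values of the odd function $f(\xi)=\xi a(\xi)$ with $|f^{(j)}(\xi)|\lesssim a(|\xi|)|\xi|^{-j}$, has a matching cancellation: pairing $f(\xi_3)+f(\xi_4)=f(\xi_3)-f(-\xi_4)$ with $|\xi_3+\xi_4|\lesssim\beta$ gives $|N|\lesssim a(\mu)\beta$ in general, while in the deep-resonance zone both $N$ and $H$ vanish linearly in $p$ (by the oddness of $f$ and $\omega$), the leading coefficients being $\tfrac12\bigl(f'(|\xi_2|)-f'(|\xi_4|)\bigr)$ and $|\xi_4|-|\xi_2|$, whose ratio is $\lesssim a(\mu)\mu^{-1}$. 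Dividing in each case, $|b^a_4|\lesssim a(\mu)\mu^{-1}$ on $\Gamma_4^\lambda$.

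\textbf{Derivative bounds and extension.} The tangential derivative bounds come the same way: $N$ and $H$ are themselves symbols with controlled derivatives at scales $\lambda,\beta,\mu,\mu$, and differentiating the quotient and re-running the cancellation bookkeeping gives $|\partial_1^{\alpha_1}\partial_2^{\alpha_2}\partial_3^{\alpha_3}\partial_4^{\alpha_4}b^a_4|\lesssim a(\mu)\mu^{-1}\lambda^{-\alpha_1}\beta^{-\alpha_2}\mu^{-\alpha_3-\alpha_4}$ on the constraint set. To pass from the diagonal set to the full dyadic box in $\R^4$ I would parametrize $\R^4\cong\Gamma_4\times\R$ by adjoining the transversal coordinate $\xi_1+\xi_2+\xi_3+\xi_4$ and declare $\tilde b^a_4$ independent of it (i.e. $\tilde b^a_4$ equals $b^a_4$ composed with the projection back to $\Gamma_4$); the transversal derivative bounds then follow from the tangential ones. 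This quantitative extension step is carried out as in \cite{KochTataru2012} and \cite{ChristHolmerTataru2012} and produces a genuinely smooth $\tilde b^a_4$ with the stated bounds, the implicit constants depending only on $|\alpha|$.

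\textbf{Main obstacle.} The technical heart is the deep-resonance region, where $H$ loses its $\sim\mu\beta$ lower bound and one must extract from $N$ a compensating vanishing of exactly the same order; this rests on the evenness of $a$ (so that $\xi_i a(\xi_i)+\xi_j a(\xi_j)$ is small when $\xi_i\approx-\xi_j$) together with the symbol regularity controlling the quadratic Taylor remainders, and it becomes most subtle when $\lambda\sim\beta\sim\mu$ so that the leading linear terms of $N$ and $H$ may themselves degenerate. Organizing the case distinction --- by the relative sizes of $\lambda,\beta,\mu$ and the sign patterns of the $\xi_i$ --- so that each case simultaneously yields the pointwise bound, the matching derivative bounds, and a smooth off-diagonal extension is the part that requires care.
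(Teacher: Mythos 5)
Your high-level plan---read $b_4^a$ off as the quotient of the symmetrized multiplier $N=\sum_i\xi_i a(\xi_i)$ by the resonance function $H=\sum_i\omega(\xi_i)$, extract matching cancellations from the evenness of $a$, and extend off $\Gamma_4^\lambda$---is the same one the paper uses, and your Taylor bookkeeping in the parameter $p=\xi_1+\xi_2$ works cleanly when the scales are separated. The gap, which you flag but do not close, is the case $\lambda\sim\beta\sim\mu$ near the \emph{double} resonance. Your characterization of the zero set as ``$H=0$ precisely when $\xi_1=-\xi_2$ and $\xi_3=-\xi_4$'' misses the second branch: with, say, $\xi_1,\xi_3>0$ and $\xi_2,\xi_4<0$ one has the factorization $H=2(\xi_1+\xi_2)(\xi_2+\xi_3)$ on $\Gamma_4^\lambda$, so $H$ also vanishes on $\xi_2+\xi_3=0$, and near the intersection of these two hyperplanes $H$ has a quadratic zero. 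In your notation the linear coefficient $2(|\xi_4|-|\xi_2|)=2(\xi_2+\xi_3)$ of $H$ vanishes together with the linear coefficient $\tfrac12(f'(|\xi_2|)-f'(|\xi_4|))$ of $N$, so the asserted ``ratio $\lesssim a(\mu)\mu^{-1}$'' is a $0/0$ indeterminacy that is not resolved. One can rescue it (to leading order $N\approx-\tfrac12\,p\,(\xi_2+\xi_3)\,f''(|\xi_4|)$, so the full quadratic factor divides out), but you do not perform this second-order analysis, and pushing it to arbitrary-order derivative bounds via iterated Taylor remainders is unpleasant. The proposed extension step is also problematic: declaring $\tilde b_4^a$ constant along the direction $(1,1,1,1)$ mixes the slow $\xi_1$-direction into $\partial_4$ and would only give $|\partial_4\tilde b_4^a|\lesssim a(\mu)\mu^{-1}\lambda^{-1}$ rather than the required $a(\mu)\mu^{-2}$.

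The paper handles all of this at once by introducing the auxiliary function $q(\xi,\eta)=\dfrac{\xi a(\xi)+\eta a(\eta)}{\xi+\eta}$, smooth precisely because $a$ is even, with symbol bounds $|q|\lesssim a(N)$ and $|\partial^\alpha q|\lesssim a(N)N^{-|\alpha|}$ for $|\xi|\sim|\eta|\sim N$. Pairing terms and absorbing one linear factor of $H$ into $q$ reduces the all-comparable case to the difference quotient
\[
Cb_4^a(\xi_1,\xi_2,\xi_3,\xi_4)=\frac{q(\xi_1,\xi_2)-q\bigl(\xi_1+(\xi_2+\xi_3),\,\xi_2-(\xi_2+\xi_3)\bigr)}{\xi_2+\xi_3},
\]
which the smoothness of $q$ bounds automatically, together with all derivatives, uniformly as $\xi_2+\xi_3\to0$, no matter how small $\xi_1+\xi_2$ also is; and the formula is already defined and smooth on the full dyadic box in $\R^4$, supplying the off-diagonal extension with the correct anisotropic bounds for free. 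You should incorporate this $q$-lemma rather than pursue the direct Taylor route.
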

\begin{proof}
In the following we can assume that $\omega(\xi_1)+\omega(\xi_2)+\omega(\xi_3)+\omega(\xi_4) \neq 0$ as long as we show $b_4$ to be smooth because it is easy to see that $\xi_1 a(\xi_1)+\xi_2 a(\xi_2) + \xi_3 a(\xi_3) + \xi_4 a(\xi_4)=0$  whenever $\omega(\xi_1)+\omega(\xi_2)+\omega(\xi_3)+\omega(\xi_4)=0$.

Furthermore, due to symmetry, we can assume that $\xi_3 >0$, $\xi_4<0$ and $|\xi_1| \leq |\xi_2| \leq |\xi_3| \leq |\xi_4|$, and by the support condition $\mu \gtrsim 1$.\\
\textbf{Case A:} $|\xi_2| \ll |\xi_3|$.\\
\underline{Subcase AI:} $\xi_1 > 0$ and $\xi_2 > 0$. We have
\begin{equation*} 
\omega(\xi_1) + \omega(\xi_2) + \omega(\xi_3) + \omega(\xi_4) = -2(\xi_1 \xi_2 + (\xi_1 + \xi_2) \xi_3),
\end{equation*}
 and we consider
\begin{equation*}
C b^{a}_4(\xi_1,\xi_2,\xi_3,\xi_4) = \frac{\xi_1 a(\xi_1) + \xi_2 a(\xi_2)}{\xi_1 \xi_2 + (\xi_2+\xi_1)\xi_3} + \frac{\xi_3 a(\xi_3) + \xi_4 a(\xi_4)}{(\xi_1 \xi_2 + \xi_2 \xi_3 + \xi_1 \xi_3)}.
\end{equation*}
The size and regularity properties of the first term follow from the size and regularity properties of $a$. For the second term we multiply with $1=-(\xi_1+\xi_2)/(\xi_3+\xi_4)$. We set
\begin{equation*}
q(\xi,\eta) = \frac{\xi a(\xi) + \eta a(\eta)}{ \xi + \eta},
\end{equation*}
which is a smooth function. Since $q$ satisfies the bounds 
\begin{equation*}
|\partial_\xi^m \partial_\eta^n q(\xi,\eta)| \lesssim a(N) N^{-(m+n)}
\end{equation*}
for $|\xi| \sim |\eta| \sim N$ and $m,n \in \mathbb{N}_0$, the conclusion follows also for the second term
\begin{equation*}
\frac{(\xi_1+\xi_2)(\xi_3 a(\xi_3) + \xi_4 a(\xi_4))}{(\xi_1 \xi_2 + \xi_2 \xi_3 + \xi_1 \xi_3)(\xi_3 + \xi_4)} = \frac{\xi_1+\xi_2}{\xi_1 \xi_2 + \xi_2 \xi_3 + \xi_1 \xi_3} q(\xi_3,\xi_4).
\end{equation*}
\underline{Subcase AII:} $\xi_1<0, \xi_2>0$. We find 
\begin{equation*}
\omega(\xi_1)+\omega(\xi_2)+\omega(\xi_3)+\omega(\xi_4)=-2(\xi_1+\xi_2)(\xi_1+\xi_3).
\end{equation*}
Hence,
\begin{equation*}
\begin{split}
C b^a_4(\xi_1,\xi_2,\xi_3,\xi_4) &= \frac{\xi_1 a(\xi_1) + \xi_2 a(\xi_2)}{(\xi_1+\xi_3)(\xi_1+\xi_2)} + \frac{\xi_3 a(\xi_3)+\xi_4 a(\xi_4)}{(\xi_1+\xi_3)(\xi_3+\xi_4)} \\
 &= \frac{1}{\xi_1+\xi_3} q(\xi_1,\xi_2) - \frac{1}{\xi_1+\xi_3} q(\xi_3,\xi_4),
 \end{split}
\end{equation*}
which satisfies the required bounds because $|\xi_1| \ll |\xi_3|$.\\
\textbf{Case B:} $|\xi_1| \ll |\xi_2| \sim |\xi_2| \sim |\xi_3|$.\\
After observing that
\begin{equation*}
|\omega(\xi_1) + \omega(\xi_2) + \omega(\xi_3) + \omega(\xi_4)| \sim |\xi_4|^2,
\end{equation*}
it is straight-forward to check that the extension
\begin{equation*}
\frac{\xi_1 a(\xi_1) + \xi_2 a(\xi_2) + \xi_3 a(\xi_3) + \xi_4 a(\xi_4)}{\omega(\xi_1) + \omega(\xi_2) + \omega(\xi_3) + \omega(\xi_4)}
\end{equation*}
suffices.\\
\textbf{Case C:} $|\xi_1| \sim |\xi_2| \sim |\xi_3| \sim |\xi_4|$.\\
We can assume $\xi_4 < 0, \xi_2 < 0$ and $\xi_1, \xi_3  >0$ and write
\begin{equation*}
\begin{split}
C b^a_4(\xi_1,\xi_2,\xi_3,\xi_4) &= \frac{a(\xi_1)\xi_1 + a(\xi_2) \xi_2}{(\xi_1+\xi_2)(\xi_2+\xi_3)} + \frac{a(\xi_3) \xi_3 + a(\xi_4) \xi_4}{(\xi_1+\xi_2)(\xi_2+\xi_3)} \\
&= \frac{q(\xi_1,\xi_2) - q(\xi_3,-\xi_1-\xi_2-\xi_3)}{\xi_2+\xi_3} \\
&= \frac{q(\xi_1,\xi_2) - q(\xi_1+(\xi_2+\xi_3),\xi_2-(\xi_2+\xi_3))}{\xi_2+\xi_3}.
\end{split}
\end{equation*}
Now the bounds follow from the size and regularity of $q$.
\end{proof}
 After smoothly extending the symbol on a dyadic scale $ \{ (\xi_1,\xi_2,\xi_3,\xi_4) \in \Gamma^\lambda_4: |\xi_1| \sim \nu, |\xi_2| \sim \beta, |\xi_3|,|\xi_4| \sim \mu \}$ off diagonal, we can separate variables without restriction (possibly after an additional partition of unity):
\begin{equation}
\label{eq:separatedVariablesMultiplierMBO}
\tilde{b}^a_4(\xi_1,\xi_2,\xi_3,\xi_4) \sim \tilde{b}^a_4(N_1,N_2,N_3,N_4) \chi_1(\xi_1) \chi_2(\xi_2) \chi_3(\xi_3) \chi_4(\xi_4)
\end{equation}
with regular bump functions $\chi$ of size $\lesssim 1$ localized at $|\xi_i| \lesssim N_i$ so that we can absorb the bump functions into the frequency projectors and invert the Fourier transform in space. The remaining expression in space and time will be estimated by short-time Strichartz estimates.

For details on the separation of variables by expanding $b_4^a$ into a rapidly converging Fourier series, see \cite[Section~5]{Hani2012} and in the short-time context \cite[Section~5]{ChristHolmerTataru2012}.

The boundary term $E^{a,\lambda}_1(u)$ can be estimated in a favorable way in terms of regularity. 
\begin{proposition}
\label{prop:energyBoundaryTermBoundMBO}
Let $0<l<1/4<s<1/2$ and $a \in S^s_{\varepsilon}$. Then, we find the estimate
\begin{equation*}
|E^{a,\lambda}_1(u(t))| \lesssim_{s,l} \Vert u(t) \Vert_{H^{s,l}_{\lambda}}^2 E^{a,\lambda}_0(u(t)).
\end{equation*}
the estimate to hold with implicit constant independent of $\lambda$.
\end{proposition}
\begin{proof}
We use a dyadic decomposition of $\Gamma^{\lambda}_4$ and the expansion \eqref{eq:separatedVariablesMultiplierMBO} to write
\begin{equation}
\label{eq:estimateE1}
\begin{split}
|E^{a,\lambda}_1(u)| &= \left| \int_{\Gamma^\lambda_4} \tilde{b}^a_4(\xi_1,\xi_2,\xi_3,\xi_4) \hat{u}(\xi_1) \hat{u}(\xi_2) \hat{u}(\xi_3) \hat{u}(\xi_4) d\Gamma^\lambda_4 \right| \\
&\leq \sum_{n_1 \leq n_2 \leq n_3 \leq n_4} \left| \int_{\Gamma^{\lambda}_4: |\xi_i| \sim 2^{n_i}} \tilde{b}^{a}_4(\xi_1,\xi_2,\xi_3,\xi_4) \hat{u}(\xi_1) \hat{u}(\xi_2) \hat{u}(\xi_3) \hat{u}(\xi_4) d\Gamma_4^\lambda \right| \\
&\lesssim \sum_{n_1 \leq n_2 \leq n_3 \leq n_4} |\tilde{b}^a_4(2^{n_1},2^{n_2},2^{n_3},2^{n_4})| \left|  \int_{\mathbb{\lambda T}} P_{n_1} u P_{n_2} u P_{n_3} u P_{n_4} u dx \right|.
\end{split}
\end{equation}
The normalization of $d \Gamma_4^\lambda$ allows us to return to position space with an estimate independent of $\lambda$. The size estimate of $b^a_4$ and applications of H\"older's and scale-invariant Bernstein's inequality allow for the continuation of \eqref{eq:estimateE1}
\begin{equation*}
\begin{split}
 &\lesssim \sum_{\substack{ n_1 \leq n_2 \leq n_3 \leq n_4,\\ n_4 \geq -2}} a(2^{n_4}) 2^{-n_4} \Vert P_{n_1} u \Vert_{L_x^{\infty}(\lambda \T)} \Vert P_{n_2} u \Vert_{L_x^{\infty}(\lambda \T)} \\
 &\quad \quad \times \Vert P_{n_3} u \Vert_{L_x^2(\lambda \T)} \Vert P_{n_4} u \Vert_{L_x^2(\lambda \T)} \\
&\lesssim \sum_{\substack{n_1 \leq n_2 \leq n_3 \leq n_4,\\ n_4 \geq -2}} a(2^{n_4}) \frac{2^{(n_1+n_2)/2}}{2^{n_4}} \prod_{i=1}^4 \Vert P_{n_i} u \Vert_{L_x^{2}(\lambda \T)} \\
&\lesssim \Vert u(t) \Vert_{H^{s,l}_{\lambda}}^2 E^{a,\lambda}_0(u),
\end{split}
\end{equation*}
which yields the claim.
\end{proof}
Now we estimate the remainder. Since the localization in time yields a behavior of solutions very similar to the real line case, some of the arguments from the proof below can be found in the corresponding proof on the real line \cite[Proposition~8.5.,~p.~1127]{Guo2011MBO}.
\begin{proposition}
\label{prop:remainderEnergyEstimateMBO}
Let $0<l<1/4<s $ and $T \in (0,1]$. There exists $\varepsilon = \varepsilon(s) >0$ and $\tilde{\varepsilon} (s) >0$ so that
\begin{equation}
\label{eq:remainderEnergyEstimateMBO}
\left| \int_{0}^T  R^{a,\lambda}_6(u) \right| \lesssim T \Vert u \Vert_{F^{s-\tilde{\varepsilon},l}_\lambda(T)}^6
\end{equation}
holds true for any $u \in C([-T,T],H^{\infty}_{\lambda})$ and $a \in S^{s}_{\varepsilon}$.
\end{proposition}
\begin{proof}
We have to estimate
\begin{equation}
\label{eq:reducedExpressionRemainderEstimateMBO}
\begin{split}
\int_0^T \int_{\Gamma^\lambda_6} [\tilde{b}^a_4(\xi_1,\xi_2,\xi_3,\xi_4+\xi_5+\xi_6)(\xi_4+\xi_5+\xi_6)] \prod_{j=1}^6 \hat{u}(t,\xi_j) d\Gamma_6^\lambda dt.
\end{split}
\end{equation}

Smoothly divide the frequencies into dyadic blocks and use the notation $|\xi_j| \sim 2^{k_j} = K_j$. Due to symmetry, we can assume that $K_1 \leq K_2 \leq K_3, \; \; K_4 \leq K_5 \leq K_6$. We write $\xi_{456} = \xi_4 + \xi_5 + \xi_6$. Temporarily, we introduce an additional frequency projector $\tilde{P}_K$ for $\xi_{456}$, which is also required to be smooth, and write $u_{k_i}$ for $P_{k_i} u$ in the following.

To reduce the Case-by-Case analysis, we suppose for the remainder of the argument that all frequencies under consideration are high, that means at least of size $1$. Due to favorable pointwise bounds and improved bilinear estimates (Remark \ref{rem:BilinearLowFrequencyRefinement}), the corresponding bounds for low frequencies are better, and thus omitted.

 We find
\begin{equation}
\label{eq:dyadicallyLocalizedEnergyEstimateMBO}
 \eqref{eq:reducedExpressionRemainderEstimateMBO} \lesssim \sum_{K_j,K} \left| \int_0^T \int_{\Gamma_6: |\xi_j| \sim K_j} \tilde{b}^a_4(\xi_1,\xi_2,\xi_3,\xi_{456}) \xi_{456} \chi_K(\xi_{456}) \prod_{j=1}^6 \hat{u}_j(t,\xi_j) dt \right|.
\end{equation}
We denote $\chi_K(\xi)=K^{-1} \chi(K^{-1} \xi)$, where $\chi$ denotes a suitable bump function. To derive estimates in terms of the short-time norms, we have to localize time reciprocally to the highest occuring frequency.
We bound the dyadically localized expression \eqref{eq:dyadicallyLocalizedEnergyEstimateMBO} in several cases:\\
\textbf{Case 1:} $K_5 \sim K_6 \sim K_{\max}, K_3 \lesssim K_5$: Write $C_1 = \{ (K_1,\ldots,K_6): \; K_5 \sim K_6 \sim K_{\max}, K_3 \lesssim K_5 \}$ and estimate this part of \eqref{eq:dyadicallyLocalizedEnergyEstimateMBO} by
 \begin{equation*}
 \begin{split}
 \sum_{K_j \in C_1, K \lesssim K_3} T K_6 \sup_{|I| = K_6^{-1}} | \int_{I} \int_{\Gamma^{\lambda}_6} &\tilde{b}^a_4(\xi_1,\xi_2,\xi_3,\xi_{456}) \xi_{456} \\
 &\quad \times \chi_K(\xi_{456}) \prod_{j=1}^6 \hat{u}(t,\xi_j) d\Gamma_6^{\lambda} dt |,
 \end{split}
 \end{equation*}
 where $I \subseteq [0,T]$ denotes an interval of length $K_6^{-1}$.\\
 We expand
\begin{equation}
\label{eq:FourierIntegralMBO}
\chi_K(\xi_{456}) = \int_{\R} e^{-ix \xi_{456}} f_K(x) dx = \int_{\R} e^{-ix \xi_4} e^{-ix \xi_5} e^{-ix \xi_6} f_K(x) dx ,
\end{equation}
and from the definition of $\chi_K$ we find for $f_K$ a uniformly in $K$ bounded $L_x^1$-norm.

Plugging in the expression \eqref{eq:separatedVariablesMultiplierMBO} and absorbing the factors coming from \eqref{eq:FourierIntegralMBO} into the $\hat{u}_i$\footnote{Note that the $U^2_{BO} L^2_\lambda$-norm is not effected by factors with bounded modulus (cf. \cite[Section~5]{ChristHolmerTataru2012}).}, we are left with estimating
\begin{equation*}
\begin{split}
&\quad \sum_{\underline{K} \in C_1, K \lesssim K_3} T K_6 |\tilde{b}_4^a(K_1,K_2,K_3,K) K | \sup_{|I| = K_6^{-1}} | \int_I \int_{\Gamma_6^\lambda} \prod_{j=1}^6 \hat{u}_{k_j}(t,\xi_j) d\Gamma_6^\lambda dt | \\
&\lesssim T \sum_{ \underline{K} \in C_1, K \lesssim K_3} |\tilde{b}^a_4(K_1,K_2,K_3,K) K| K_6 \\
&\quad \quad \times \sup_{|I| = K_6^{-1}}  | \int_{I} \int_{\lambda \T} u_{k_1} \ldots u_{k_6} dx dt |,
\end{split}
\end{equation*}
where we have changed back to position space at last. Using the pointwise estimate for $\tilde{b}^a_4$, we find
\begin{equation*}
\sum_{K \lesssim K_3} |b^a_4(K_1,K_2,K_3,K) K| \lesssim a(K_3).
\end{equation*}
Next, we use the short-time estimates from Section \ref{section:shorttimeLinearEstimates} to derive suitable estimates for the expression
\begin{equation}
\label{eq:ReducedRemainderPositionSpaceMBO}
\left| \int_{I} dt \int_{\lambda \T} dx P_{k_1} u_1 \ldots P_{k_6} u_6 \right|.
\end{equation}
The bounds for \eqref{eq:ReducedRemainderPositionSpaceMBO} are derived according to the separation of the involved frequencies. Let $K_1^* \leq \ldots \leq K_6^*$ denote the increasing rearrangement of $\{K_1,\ldots,K_6\}$, and similarly for $\{k_1,\ldots,k_6 \}$.\\
\underline{Subcase 1a:} $K_4^* \ll K_6^*$, $K_3^* \ll K_4^* \sim K_6^*$: In this case we can use two bilinear Strichartz estimates. Say $K_1$ and $K_2$ are the lowest and second-to-lowest frequencies and $K_4,K_5,K_6$ correspond to $K_4^*,K_5^*,K_6^*$. Following Remark \ref{rem:IntervalSeparation}, we arrange $u_{k_4} u_{k_5}$ and $u_{k_3} u_{k_6}$ in pairs for two bilinear Strichartz estimates and use Bernstein's inequality on $u_{k_1}$ and $u_{k_2}$. We find by Proposition \ref{prop:StrichartzEstimatesMBO}
\begin{equation*}
\begin{split}
\eqref{eq:ReducedRemainderPositionSpaceMBO} &\lesssim \Vert u_{k_1} \Vert_{L_t^\infty L_x^\infty(\lambda \T)} \Vert u_{k_2} \Vert_{L_t^\infty L_x^\infty(\lambda \T)} \Vert u_{k_4} u_{k_5} \Vert_{L_t^2 L_x^2(\lambda \T)} \Vert u_{k_3} u_{k_6} \Vert_{L_t^2 L_x^2(\lambda \T)} \\
&\lesssim \frac{(K_1^* K_2^*)^{1/2}}{K_6^*} \Vert u_{k_1} \Vert_{F_{k_1,\lambda}} \Vert u_{k_2} \Vert_{F_{k_2,\lambda}} \ldots \Vert u_{k_6} \Vert_{F_{k_6,\lambda}}. 
\end{split}
\end{equation*}
Gathering the estimates, we have proved
\begin{equation*}
\begin{split}
 \left| \int_0^T R^{a,\lambda}_{6,C_{1a}}(u) \right| &\lesssim T \sum_{K_i} (K_4^*)^{2(s+\varepsilon)} (K_1^* K_2^*)^{1/2} \prod_{i=1}^6 \Vert u_{k_i} \Vert_{F_{k_i,\lambda}} \\
&\lesssim T \prod_{i=1}^6 \Vert u \Vert_{F^{s-\tilde{\varepsilon},l}_{\lambda}(T)},
\end{split}
\end{equation*}
where the last step follows from carrying out the summations and choosing $\varepsilon$ and $\tilde{\varepsilon}$ sufficiently small.\\
\underline{Subcase 1b}: $K_2^* \ll K_3^* \sim K_6^*$, $K_1^* \ll K_2^* \sim K_6^*$: In this case we use a bilinear estimate on $u_{k_2^*} u_{k_6^*}$, three linear $L^6_{t,x}$-Strichartz estimates on $u_{k_3^*}$, $u_{k_4^*}$, $u_{k_5^*}$ and one pointwise bound $u_{k_1^*}$. This is clearly possible if $K_2^* \ll K_3^*$. If only $K_1^* \ll K_2^*$, this follows from a similar argument as in Remark \ref{rem:IntervalSeparation} because the number of functions with comparable frequency sizes is odd.\\
This gives
\begin{equation*}
\begin{split}
|R^{a,\lambda}_{6,C_{1b}}(u)| &\lesssim T \sum_{K_1^* \ll K_2^* \sim K_6^*} (K_1^* K_6^*)^{1/2} (K_6^*)^{2(s+\varepsilon)} \prod_{i=1}^6 \Vert u_{k_i} \Vert_{F_{k_i,\lambda}} \\
&\lesssim T \prod_{i=1}^6 \Vert u \Vert_{F^{s-\tilde{\varepsilon},l}_{\lambda}(T)}.
\end{split}
\end{equation*}
\underline{Subcase 1c:} $K_1^* \sim K_6^*$. Here, no multilinear estimates are used, but six $L^6_{t,x}$-Strichartz estimates to find
\begin{equation*}
| \int_0^T R_{6,C_{1c}}^{a,\lambda}(u)| \lesssim T \sum_{K_1^* \sim K_6^*} (K_6^*)^{2(s+\varepsilon)} K_6^* \prod_{i=1}^6 \Vert u_{k_i} \Vert_{F_{k_i,\lambda}} \lesssim T \prod_{i=1}^6 \Vert u \Vert_{F_\lambda^{s-\tilde{\varepsilon},l}(T)}.
\end{equation*}
\textbf{Case 2}: $K_2 \sim K_3 \sim K_{\max}, K_6 \lesssim K_2$: Introduce the notation
\begin{equation*}
C_2 = \{(K_1,\ldots,K_6) \; | \; K_2 \sim K_3 \sim K_{\max}, K_6 \lesssim K_2 \}
\end{equation*}
 and suppose in the following $K \lesssim K_6$. We have to bound
\begin{equation}
\label{eq:reducedRemainderEstimateIIMBO}
\begin{split}
T \sum_{\underline{K} \in C_2, K \lesssim K_6} K_3 \sup_{|I| = K_3^{-1}} | \int_I \int_{\Gamma_6^\lambda} &\tilde{b}_4^a(\xi_1,\xi_2,\xi_3,\xi_{456}) \xi_{456} \chi_K(\xi_{456})  \\
&\quad \times \prod_{j=1}^6 \hat{u}_{k_j}(t,\xi_j) d\Gamma_6^\lambda dt |.
\end{split}
\end{equation}
Following along the above lines, we are led to the estimate
\begin{equation*}
\begin{split}
\eqref{eq:reducedRemainderEstimateIIMBO} &\lesssim T \sum_{\underline{K} \in C_2, K \lesssim K_6} |b_4(K_1,K_2,K_3,K) K| K_3 \sup_{|I|=K_3^{-1}} \left| \int_I \int_{\lambda \T} u_{k_1} \ldots u_{k_6} dx dt \right| \\
&\lesssim T \sum_{\underline{K} \in C_2} K_6 K_3^{2(s+\varepsilon)} \sup_{|I|=K_3^{-1}} \left| \int_{I} \int_{\lambda \T} u_{k_1} \ldots u_{k_6} dx dt \right|.
\end{split}
\end{equation*}
The product is estimated according to the separation of the frequencies like in Case 1.\\
\underline{Subcase 2a:} $K_4^* \ll K_5^* \sim K_6^*, \quad K_3^* \ll K_4^* \sim K_6^*$: Here, we can use two bilinear Strichartz estimates on the highest frequencies leading to a gain of $(K_6^*)^{-1}$ and two pointwise bounds on the lowest frequencies, which gives a factor $(K_1^* K_2^*)^{1/2}$. Summation yields
\begin{equation*}
T \sum_{K_i^*} K_4^* (K_6^*)^{2(s+\varepsilon)} (K_1^* K_2^*)^{1/2} (K_6^*)^{-1} \prod_{i=1}^6 \Vert u_{k_i} \Vert_{F_{k_i,\lambda}} \lesssim T \prod_{i=1}^6 \Vert u \Vert_{F_\lambda^{s-\tilde{\varepsilon},l}(T)}.
\end{equation*}
\underline{Subcase 2b:} $K_2^* \ll K_3^* \sim K_6^*$, $K_1^* \ll K_2^* \sim K_6^*$: In this case one uses again one bilinear estimate, three $L^6_{t,x}$-Strichartz estimates and one pointwise bound to find
\begin{equation*}
\begin{split}
T \sum_{K_i^*} (K_1^*)^{1/2} K_6^* (K_6^*)^{-1/2} (K_6^*)^{2(s+\varepsilon)} \prod_{i=1}^6 \Vert u_{k_i} \Vert_{F_{k_i,\lambda}} \lesssim T \prod_{i=1}^6 \Vert u \Vert_{F_\lambda^{s-\tilde{\varepsilon},l}(T)}.
\end{split}
\end{equation*}
\underline{Subcase 2c}: $K_1^* \sim K_6^*$: After using six $L^6_{t,x}$-Strichartz estimates, the estimate is concluded like in Subcase 1c.\\
\textbf{Case 3}: $K_3 \sim K_6 \sim K_{max}; \; K_2, K_5 \ll K_3$: In this case the above argument is enhanced with an additional symmetrization, which corresponds to a further integration by parts. Note that
\begin{equation}
\begin{split}
\label{eq:SecondSymmetrizationMBO}
&\quad \int_0^T \int_{\Gamma_6^\lambda} b_4^a(\xi_1,\xi_2,\xi_3,\xi_{456}) \xi_{456} \prod_{i=1}^6 \hat{u}(t,\xi_i) d\Gamma_6^\lambda dt \\
&= \int_0^T \int_{\Gamma_6^\lambda} b_4^a(\xi_4,\xi_5,\xi_6,\xi_{123}) \xi_{123} \prod_{i=1}^6 \hat{u}(t,\xi_i) d\Gamma_6^\lambda dt \\
&= \int_0^T \int_{\Gamma_6^\lambda} -b_4^a(\xi_4,\xi_5,\xi_6,-\xi_{456}) \xi_{456} \prod_{i=1}^6 \hat{u}(t,\xi_i) d\Gamma_6^\lambda dt \\
&= - \int_0^T \int_{\Gamma_6^\lambda} b_4^a(-\xi_4,-\xi_5,-\xi_6,\xi_{456}) \xi_{456} \prod_{i=1}^6 \hat{u}(t,\xi_i) d\Gamma_6^\lambda dt.
\end{split}
\end{equation}
Thus, it is enough to estimate
\begin{equation*}
\int_0^T \int_{\Gamma_6^\lambda} [b_4^a(\xi_1,\xi_2,\xi_3,\xi_{456}) - b_4^a(-\xi_4,-\xi_5,-\xi_6,\xi_{456})] \xi_{456} \prod_{i=1}^6 \hat{u}(t,\xi_i) d\Gamma_6^\lambda dt.
\end{equation*}

By the mean value theorem and regularity of $\tilde{b}^a_4$, we find the symmetrized expression to satisfy the same regularity assumptions like $\tilde{b}_4^a$. Further, note the size estimate
\begin{equation*}
|\tilde{b}_4^a(\xi_1,\xi_2,\xi_3,\xi_{456}) - \tilde{b}_4^a(-\xi_4,-\xi_5,-\xi_6,\xi_{456})| \lesssim \frac{(K_6^*)^{2(s+\varepsilon)}}{(K_6^*)^2} K_4^*.
\end{equation*}

As $K_4^* \ll K_5^* \sim K_6^*$, we can use two bilinear Strichartz estimates and two pointwise bounds as in the above Subcases 1a, 2a to finish the proof.
\end{proof}
To conclude the proof of the energy estimate, we derive bounds for the frequency localized energy. Recall the following lemma from \cite{KochTataru2007}, which was only proved on the real line; however, the proof carries over to $\lambda \T$.
\begin{lemma}{\cite[Lemma~5.5.,~p.~26]{KochTataru2007}}
\label{lem:EnergyThresholdSymbols}
For any $u_0 \in H_{\lambda}^{s,l}$ and $\varepsilon >0$, there is a sequence $(\beta_n)_{n \in \mathbb{N}}$ satisfying the following conditions:
\begin{enumerate}
\item[(a)] For any $n \in \mathbb{N}$, we have $2^{2ns} \Vert P_n u_0 \Vert_{L^2(\lambda \T)}^2 \leq \beta_n \Vert u_0 \Vert_{H_{\lambda}^s}^2 $,
\item[(b)] $\sum_n \beta_n \lesssim 1$,
\item[(c)] $(\beta_n)$ satisfies a log-Lipschitz condition, which is given by
\begin{equation*}
| \log_2 \beta_{n} - \log_2 \beta_{m} | \leq \frac{\varepsilon}{2} |n - m|.
\end{equation*}
\end{enumerate}
\end{lemma}
By this, we finish the proof of Proposition \ref{prop:shorttimeEnergyEstimateMBO} in a similar spirit to \cite[Section~5]{KochTataru2007}.
\begin{proof}[Proof of Proposition \ref{prop:shorttimeEnergyEstimateMBO}]
We choose $\varepsilon>0$ and $\tilde{\varepsilon}>0$ in dependence of $s>1/4$ so that \eqref{eq:remainderEnergyEstimateMBO} becomes true for any $a \in S^s_{\varepsilon}$ by virtue of Proposition \ref{prop:remainderEnergyEstimateMBO}.
 
Let $k_0 \in  \mathbb{N}$ and $(\beta_n)$ be an envelope sequence from Lemma \ref{lem:EnergyThresholdSymbols} for the initial data $u_0$. We prove
\begin{equation}
\label{eq:energyThresholdBoundMBO}
\sup_{t \in [-T,T]} 2^{2ks} \Vert P_k u(t) \Vert_{L^2(\lambda \T)}^2 \lesssim \beta_k(\Vert u_0 \Vert_{H_{\lambda}^{s,l}}^2 + T \Vert u \Vert_{F_{\lambda}^{s-\tilde{\varepsilon},l}(T)}^6 ),
\end{equation}
from which follows \eqref{eq:shorttimeEnergyEstimateMBO} after carrying out the summation over $k$, due to property (b) from Lemma \ref{lem:EnergyThresholdSymbols}.

 We consider $\tilde{a}^{k_0}_k= 2^{2ks} \max(1,\beta_{k_0}^{-1} 2^{-\varepsilon |k-k_0|})$, and we find
\begin{equation*}
\begin{split}
\sum_{k \geq 1} \tilde{a}^{k_0}_k \Vert P_k u_0 \Vert_{L^2(\lambda \T)}^2 &\leq \sum_{k} 2^{2ks} \Vert P_k u_0 \Vert_{L^2(\lambda \T)}^2 + 2^{2ks} 2^{-\frac{\varepsilon}{2} |k-k_0|} \beta_{k}^{-1} \Vert P_k u_0 \Vert_{L^2(\lambda \T)}^2 \\
&\lesssim_{\varepsilon} \Vert u_0 \Vert_{H_{\lambda}^{s,l}}^2,
\end{split}
\end{equation*} 
due to the slowly varying condition and property (a) from Lemma \ref{lem:EnergyThresholdSymbols}.

The implicit constant in the estimate above does not depend on $k_0$, but only on $\varepsilon$. Smoothing out a linearly interpolated version, we can find a symbol $a^{k_0}(\xi) \in S^s_{\varepsilon}$ so that
\begin{equation*}
a^{k_0}(\xi) \sim \tilde{a}^{k_0}_k, \; \; |\xi| \sim 2^{k}.
\end{equation*}
For details on this procedure, see \cite[Subsection~2.3]{OhWang2018}.

 Next, following the computations from the beginning of this subsection, we find by means of Proposition \ref{prop:energyBoundaryTermBoundMBO} and \ref{prop:remainderEnergyEstimateMBO}
\begin{equation*}
\Vert u(t) \Vert_{H_\lambda^a}^2 \lesssim_{s} \Vert u_0 \Vert_{H_\lambda^a}^2 + \Vert u_0 \Vert_{H^{s,l}_\lambda}^2 \Vert u_0 \Vert_{H_\lambda^a}^2 + \Vert u(t) \Vert_{H^{s,l}_\lambda}^2 \Vert u(t) \Vert_{H_\lambda^a}^2 + T \Vert u \Vert_{F^{s-\tilde{\varepsilon},l}_\lambda(T)}^6.
\end{equation*}
Requiring $\Vert u(t) \Vert_{H^{s,l}_{\lambda}}$ to be small, this implies
\begin{equation*}
\Vert u(t) \Vert_{H_\lambda^a}^2 \lesssim_{s} \Vert u_0 \Vert_{H_\lambda^a}^2 + T \Vert u \Vert^6_{F^{s-\tilde{\varepsilon},l}_\lambda(T)} \lesssim_\varepsilon \Vert u_0 \Vert_{H_\lambda^{s,l}}^2 + T \Vert u \Vert^6_{F_{\lambda}^{s-\tilde{\varepsilon},l}(T)}
\end{equation*}
with the second estimate following from $\Vert u_0 \Vert_{H_\lambda^a}^2 \lesssim_{\varepsilon} \Vert u_0 \Vert_{H_\lambda^{s,l}}^2$. At last, since $\Vert u \Vert_{H_\lambda^a}^2 \sim \sum_{k \geq 1} \tilde{a}^{k_0}_k \Vert P_k u(t) \Vert_{L^2(\lambda \T)}^2$, we arrive at
\begin{equation*}
\sup_{t \in [0,T]} \left( \sum_{k \geq 0} \tilde{a}^{k_0}_k \Vert P_k u(t) \Vert_{L^2(\lambda \T)}^2 \right) \lesssim_s \Vert u_0 \Vert_{H_\lambda^{s,l}}^2 + T \Vert u \Vert^6_{F^{s-\tilde{\varepsilon},l}_{\lambda}(T)}.
\end{equation*}
Restricting the sum to $k_0$ implies \eqref{eq:energyThresholdBoundMBO}. The proof is complete.
\end{proof}

\section{Modifications for the derivative nonlinear Schr\"odinger equation}
\label{section:ModificationsDNLS}
In this paragraph we sketch the necessary modifications to show that the assertions from Theorem \ref{thm:mainResultMBO} on periodic solutions to \eqref{eq:mBO} extend to periodic solutions to \eqref{eq:dNLS}.

We show a corresponding short-time trilinear estimate and an energy estimate with smoothing effect after adapting the short-time function spaces to the Schr\"odinger flow. This is suppressed in the notation in the following. We show in addition to the linear estimate from Lemma \ref{lem:ShorttimeEnergyEstimate}
\begin{equation*}
\left\{ \begin{array}{cl}
\Vert \partial_x (|u|^2 u) \Vert_{N^{s,l}_{\lambda}(T)} &\lesssim \Vert u \Vert^3_{F^{s,l}_{\lambda}(T)} \\
\Vert u \Vert^2_{E^{s,l}_{\lambda}(T)} &\lesssim \Vert u_0 \Vert^2_{H^{s,l}_{\lambda}} + T ( \Vert u \Vert^6_{F^{s-\varepsilon,l}_\lambda(T)} + \Vert u \Vert^8_{F_{\lambda}^{s-\varepsilon,l}(T)} )
\end{array} \right.
\end{equation*}
provided that $\lambda \geq 1$, $0<l<1/4<s$ and $\varepsilon = \varepsilon(s) > 0$, $\sup_{t \in [-T,T]} \Vert u(t) \Vert_{H^{s,l}_\lambda} \leq \delta(s)$. Like above, we suppose without loss of generality that $\int_{\mathbb{T}} u_0 dx = 0$. We start with the short-time trilinear estimate:
\begin{proposition}
\label{prop:shorttimeTrilinearEstimateDNLS}
Let $\lambda \geq 1$, $T \in (0,1]$, $0<l<1/4<s<1/2$ and suppose that $u,v,w \in F^{s,l}_{\lambda}(T)$. Then, we find the following estimate to hold:
\begin{equation}
\label{eq:ShorttimeTrilinearEstimateDNLS}
\Vert \partial_x (u \overline{v} w) \Vert_{N^{s,l}_{\lambda}(T)} \lesssim \Vert u \Vert_{F^{s,l}_{\lambda}(T)} \Vert v \Vert_{F^{s,l}_{\lambda}(T)} \Vert w \Vert_{F^{s,l}_{\lambda}(T)}.
\end{equation}
\end{proposition}
\begin{proof}
The strategy is the same as in the proof of Proposition \ref{prop:shorttimeTrilinearEstimateMBO}. The claim follows from revisiting the proof of Proposition \ref{prop:shorttimeTrilinearEstimateMBO}, and whenever one applies an estimate from Proposition \ref{prop:StrichartzEstimatesMBO}, the corresponding estimate from Proposition \ref{prop:StrichartzEstimatesSEQ} is applied.

Recall the possible frequency interactions, which were enumerated for the proof of Proposition \ref{prop:shorttimeTrilinearEstimateMBO} and remain the same. We give the details in case of $High \times Low \times Low \rightarrow High$-interaction and $High \times High \times High \rightarrow High$-interaction. In the first case, under the same assumptions as in Lemma \ref{lem:HighLowLowHighInteractionMBO}, let $I$ be an interval of length $2^{-k_4}$ and we compute by H\"older in time, a short-time bilinear Strichartz estimate and Bernstein's inequality
\begin{equation*}
\begin{split}
\Vert P_{k_4} \partial_x (u_{k_1} \overline{v}_{k_2} w_{k_3}) \Vert_{DU^2_{\Delta}(I;L^2_x(\lambda \T))} &\lesssim 2^{k_4} \Vert u_{k_1} \overline{v}_{k_2} w_{k_3} \Vert_{L_t^1(I;L^2_x(\lambda \T))} \\
&\lesssim 2^{k_4/2} \Vert u_{k_1} \Vert_{L_t^\infty(I;L^\infty(\lambda \T))} \Vert \overline{v}_{k_2} w_{k_3} \Vert_{L_t^2(I;L^2_x(\lambda \T))} \\
&\lesssim 2^{k_1/2} \min(1,2^{k_2/2}) \Vert u_{k_1} \Vert_{U^2_{\Delta}(I;L^2_x(\lambda \T))} \\
&\quad \quad \times  \Vert v_{k_2} \Vert_{U^2_{\Delta}(I;L^2_x(\lambda \T))} \Vert w_{k_3} \Vert_{U^2_{\Delta}(I;L^2_x(\lambda \T))}.
\end{split}
\end{equation*}
By the above means, the corresponding estimate to Lemma \ref{lem:HighLowLowHighInteractionMBO} follows from the definition of the function spaces.

In the second case we use H\"older in time and three $L^6_{t,x}$-Strichartz estimates to find
\begin{equation*}
\begin{split}
&\quad \Vert P_{k_4} \partial_x (u_{k_1} \overline{v}_{k_2} w_{k_3}) \Vert_{DU^2_{\Delta}(I;L^2_x(\lambda \T))} \\
&\lesssim 2^{k_4} \Vert u_{k_1} \overline{v}_{k_2} w_{k_3} \Vert_{L_t^1(I;L^2_x(\lambda \T))} \\
&\lesssim 2^{k_4/2} \Vert u_{k_1} \overline{v}_{k_2} w_{k_3} \Vert_{L_t^2(I;L^2_x(\lambda \T))} \\
&\lesssim 2^{k_4/2} \Vert u_{k_1} \Vert_{L_t^6(I;L_x^6(\lambda \T))} \Vert v_{k_2} \Vert_{L_t^6(I;L_x^6(\lambda \T))} \Vert w_{k_3} \Vert_{L_t^6(I;L_x^6(\lambda \T))} \\
&\lesssim 2^{k_4/2} \Vert u_{k_1} \Vert_{U^2_{\Delta}(I;L^2_x(\lambda \T))} \Vert v_{k_2} \Vert_{U^2_{\Delta}(I;L^2_x(\lambda \T))} \Vert w_{k_3} \Vert_{U^2_{\Delta}(I;L^2_x(\lambda \T))}.
\end{split}
\end{equation*}
Also the other cases follow as in Section \ref{section:ShorttimeTrilinearEstimateMBO}.
\end{proof}
The energy estimate is more involved due to the reduced symmetry. If one wants to stick to the use of linear and bilinear short-time Strichartz estimates, one has to integrate by parts a second time in one case of the remainder estimate. Alternatively, the claim follows from a refined trilinear estimate. Below we do the extra work of a second integration by parts to point out that the second correction satisfies better bounds, at least in this specific case.
\begin{proposition}
\label{prop:EnergyEstimateDNLS}
Let $T \in (0,1]$, $0<l<1/4<s$ and suppose that \\
$u \in C([-T,T],H^\infty_{\lambda})$ is a smooth solution to \eqref{eq:dNLS}. Then, there exists $\tilde{\varepsilon}(s)$ and $\delta(s)>0$ such that we find the estimate
\begin{equation*}
\Vert u \Vert^2_{E^{s,l}_{\lambda}(T)} \lesssim_s \Vert u_0 \Vert_{H_\lambda^{s,l}}^2 + T(\Vert u \Vert^6_{F^{s-\tilde{\varepsilon},l}_\lambda(T)} + \Vert u \Vert^8_{F^{s-\tilde{\varepsilon},l}_\lambda(T)})
\end{equation*}
to hold provided that
\begin{equation*}
\sup_{t \in [-T,T]} \Vert u(t) \Vert_{H^{s,l}_{\lambda}} \leq \delta(s).
\end{equation*}
\end{proposition}
We analyze the following generalized energy $E_0^{a,\lambda}$ for a smooth solution to \eqref{eq:dNLS}:
\begin{equation*}
E_0^{a,\lambda}(u) = \int_{\Gamma_2^\lambda} a(\xi_1) \hat{u}(\xi_1) \hat{\overline{u}}(\xi_2) d\Gamma_2^\lambda.
\end{equation*}

In the following we carry out the program from Section \ref{section:EnergyEstimatesMBO}. We have to take into account the change of dispersion relation and that the solutions are no longer real-valued. It turns out that the symmetrized expression when computing $\frac{d}{dt} E_0^{a,\lambda}$ is still close to the corresponding expression from Section \ref{section:EnergyEstimatesMBO}:
\begin{equation*}
\begin{split}
\frac{d}{dt} E_0^{a,\lambda} &= \int_{\xi_1+\xi_2=0} a(\xi_1) (i \xi_1) \int_{\xi_1 = \xi_{11}+\xi_{12}+\xi_{13}} \hat{u}(\xi_{11}) \hat{\overline{u}}(\xi_{12}) \hat{u}(\xi_{13}) d\Gamma_3^\lambda \hat{\overline{u}}(\xi_2) d\Gamma_2^\lambda \\
&\quad + \int_{\xi_1+\xi_2=0} a(\xi_1) \hat{u}(\xi_1) (i \xi_2) \int_{\xi_2 = \xi_{21}+\xi_{22} + \xi_{23}} \hat{\overline{u}}(\xi_{21}) \hat{u}(\xi_{22}) \hat{\overline{u}}(\xi_{23}) d\Gamma_3^\lambda d\Gamma_2^\lambda \\
&= -\frac{i}{2} \int_{\Gamma_4^\lambda} \left( \sum_{i=1}^4 a(\xi_i) \xi_i \right) \hat{u}(\xi_1) \hat{\overline{u}}(\xi_2) \hat{u}(\xi_3) \hat{\overline{u}}(\xi_4) d\Gamma_4^\lambda.
\end{split}
\end{equation*}
As above, we consider the correction term
\begin{equation*}
E_1^{a,\lambda}(u) = \int_{\Gamma_4^\lambda} b_4^a(\xi_1,\xi_2,\xi_3,\xi_4) \hat{u}(\xi_1) \hat{\overline{u}}(\xi_2) \hat{u}(\xi_3) \hat{\overline{u}}(\xi_4) d\Gamma_4^\lambda,
\end{equation*}
and we require the multiplier $b_4^a$ to satisfy the following identity on $\Gamma_4^\lambda$:
\begin{equation}
\label{eq:correctionTermIdentity}
(-i)(\xi_1^2 - \xi_2^2 + \xi_3^2 - \xi_4^2) b_4^a(\xi_1,\xi_2,\xi_3,\xi_4) = \frac{i}{2} ( \xi_1 a(\xi_1) + \xi_2 a(\xi_2) + \xi_3 a(\xi_3) + \xi_4 a(\xi_4)).
\end{equation}

This yields
\begin{equation*}
\begin{split}
R_6^{a,\lambda} &= \frac{d}{dt}(E_0^{a,\lambda} + E_1^{a,\lambda})\\
&= 2 \int_{\Gamma_6^\lambda} b_4^a(\underbrace{\xi_{11}+\xi_{12}+\xi_{13}}_{\xi_1},\xi_2,\xi_3,\xi_4) (i\xi_1) \hat{u}(\xi_{11}) \hat{\overline{u}}(\xi_{12}) \hat{u}(\xi_{13}) \\
&\quad \quad \times \hat{\overline{u}}(\xi_2) \hat{u}(\xi_3) \hat{\overline{u}}(\xi_4) d\Gamma_6^\lambda \\
&\quad + 2 \int_{\Gamma_6^\lambda} b_4^a(\xi_1,\underbrace{\xi_{21}+\xi_{22}+\xi_{23}}_{\xi_2},\xi_3,\xi_4) \hat{u}(\xi_1) \\
&\quad \quad \times (i\xi_2) \hat{\overline{u}}(\xi_{21}) \hat{u}(\xi_{22}) \hat{\overline{u}}(\xi_{23}) \hat{u}(\xi_3) \hat{\overline{u}}(\xi_4) d\Gamma_6^\lambda \\
&= C \Im \int_{\Gamma_6^\lambda} b_4^a(\xi_{11}+\xi_{12}+\xi_{13}, \xi_2, \xi_3, \xi_4) \xi_1 \hat{u}(\xi_{11}) \hat{\overline{u}}(\xi_{12}) \hat{u}(\xi_{13}) \\
&\quad \quad \times \hat{\overline{u}}(\xi_2) \hat{u}(\xi_3) \hat{\overline{u}}(\xi_4) d\Gamma_6^\lambda. 
\end{split}
\end{equation*}

We show the same size and regularity estimates for the symbol $b_4^a$ from \eqref{eq:correctionTermIdentity} as in Section \ref{section:EnergyEstimatesMBO}.
\begin{proposition}
\label{prop:SymbolRegularityDNLS}
Let $a \in S^s_{\varepsilon}$. Then, for each dyadic $\lambda \leq \beta \leq \mu$, there is an extension $\tilde{b_4^a}$ of $b_4^a$ from the diagonal set
\begin{equation*}
\{(\xi_1,\xi_2,\xi_3,\xi_4) \in \Gamma_4^\lambda \; | \; |\xi_1^*| \sim \lambda, |\xi_2^*| \sim \beta, |\xi_3^*| \sim |\xi_4^*| \sim \mu \}
\end{equation*}
to the full dyadic set
\begin{equation*}
\{(\xi_1,\xi_2,\xi_3,\xi_4) \in \R^4 \; | \; |\xi_1^*| \sim \lambda, |\xi_2^*| \sim \beta, |\xi_3^*| \sim |\xi_4^*| \sim \mu \},
\end{equation*}
which satisfies
\begin{equation*}
|\tilde{b}_4^a| \lesssim a(\mu) \mu^{-1}
\end{equation*}
and
\begin{equation*}
|\partial_1^{\alpha_1} \partial_2^{\alpha_2} \partial_3^{\alpha_3} \partial_4^{\alpha_4} \tilde{b}_4^a| \lesssim_{\alpha} a(\mu) \mu^{-1} N_1^{-\alpha_1} N_2^{-\alpha_2} N_3^{-\alpha_3} N_4^{-\alpha_4},
\end{equation*}
where $|\xi_i| \sim N_i$ and $|\xi_1^*|\leq \ldots \leq |\xi_4^*|$ denotes an increasing rearrangement of $\xi_i$, $i=1,\ldots,4$.
\end{proposition}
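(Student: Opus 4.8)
The plan is to run the argument of Proposition~\ref{prop:energyMultiplierBounds} essentially line by line, the only structural change being the replacement of the Benjamin--Ono resonance by the Schr\"odinger one. The computational heart is the identity, valid on $\Gamma_4^\lambda$ (i.e. for $\xi_1+\xi_2+\xi_3+\xi_4=0$),
\[
\xi_1^2-\xi_2^2+\xi_3^2-\xi_4^2=2(\xi_1+\xi_2)(\xi_1+\xi_4)=-2(\xi_1+\xi_2)(\xi_2+\xi_3),
\]
together with the companion factorizations obtained from the symmetries $\xi_1\leftrightarrow\xi_3$, $\xi_2\leftrightarrow\xi_4$ (under which $\Gamma_4^\lambda$, the numerator and the resonance are all invariant). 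Unlike for $\omega(\xi)=-\xi|\xi|$, where the analogous identity requires a case distinction on the signs of the $\xi_i$, here the factorization is unconditional; the price is that each factor couples a $u$-type frequency (slot $1$ or $3$) with a $\overline{u}$-type one (slot $2$ or $4$). Solving \eqref{eq:correctionTermIdentity} gives on the diagonal $b_4^a=-\tfrac14\,N\big/\!\big[(\xi_1+\xi_2)(\xi_1+\xi_4)\big]$ with the fully symmetric numerator $N=\sum_{i=1}^4\xi_i a(\xi_i)$.

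Next I would record that $N$ vanishes wherever the resonance does (this uses only that $a$ is even), so that $b_4^a$ is smooth, and re-use the divided difference $q(\xi,\eta)=\big(\xi a(\xi)+\eta a(\eta)\big)/(\xi+\eta)=\int_0^1 g'\big({-\eta}+t(\xi+\eta)\big)\,dt$, $g(\xi)=\xi a(\xi)$, from the proof of Proposition~\ref{prop:energyMultiplierBounds}: it is smooth everywhere and, in the regime $|\xi|\sim|\eta|\sim N$ in which it will be used — namely on near-antipodal pairs — satisfies $|q|\lesssim a(N)$ and $|\partial^\alpha q|\lesssim_\alpha a(N)N^{-|\alpha|}$. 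On $\Gamma_4^\lambda$ one has $N=(\xi_1+\xi_2)\big[q(\xi_1,\xi_2)-q(\xi_3,\xi_4)\big]=(\xi_1+\xi_4)\big[q(\xi_1,\xi_4)-q(\xi_2,\xi_3)\big]$ (from $\xi_3+\xi_4=-(\xi_1+\xi_2)$, resp. $\xi_2+\xi_3=-(\xi_1+\xi_4)$), together with the images of these under the slot symmetries; substituting into the formula for $b_4^a$ and cancelling yields representations such as $b_4^a=-\tfrac14\big(q(\xi_1,\xi_2)-q(\xi_3,\xi_4)\big)/(\xi_1+\xi_4)$ and $b_4^a=-\tfrac14(\xi_1+\xi_3)\big(q(\xi_1,\xi_3)-q(\xi_2,\xi_4)\big)\big/\!\big[(\xi_1+\xi_2)(\xi_1+\xi_4)\big]$.

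The remaining work is the case analysis over the dyadic region $\{|\xi_1^*|\sim\lambda,\ |\xi_2^*|\sim\beta,\ |\xi_3^*|\sim|\xi_4^*|\sim\mu\}$, $\lambda\le\beta\le\mu$, where the two largest frequencies are automatically comparable. After relabelling within the $u$-slots and within the $\overline{u}$-slots and using the reflection induced by complex conjugation, one may assume the two frequencies of size $\sim\mu$ sit either in opposite-type slots — in which case the configuration is exactly the one in Proposition~\ref{prop:energyMultiplierBounds} — or in same-type slots. In each configuration I would select, among the representations above, the one in which \emph{no small pairwise sum ever occurs in a denominator}: a mixed sum $\xi_i+\xi_j$ is forced small precisely in the near-resonant situation, and then $\xi_i,\xi_j$ are near-antipodal of comparable size, so the identity for $N$ lets one pull that small sum into the numerator as a coefficient of a $q$ paired at the right scale, which cancels the offending factor of the resonance; what is left has denominators of size $\sim\mu$ only, with every surviving $q$ evaluated on comparable frequencies and every non-antipodal pair handled by the size and regularity of $a$ directly. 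The size bound $|\tilde b_4^a|\lesssim a(\mu)\mu^{-1}$ and the derivative bounds $|\partial_1^{\alpha_1}\cdots\partial_4^{\alpha_4}\tilde b_4^a|\lesssim a(\mu)\mu^{-1}N_1^{-\alpha_1}\cdots N_4^{-\alpha_4}$ then come out of the quotient rule and the bounds on $q$, and one finishes, as in Proposition~\ref{prop:energyMultiplierBounds} and \cite[Section~5]{Hani2012}, by expanding each surviving $q$ and each factor $\big(1+(\text{pairwise sum})/\mu\big)^{-1}$ into a rapidly convergent Fourier series and absorbing the resulting bump functions into the frequency projectors. The hard part will be precisely this bookkeeping: enumerating the configurations (including the clustered ones where several of $\lambda,\beta,\mu$ coincide, treated by a further partition of unity as in Proposition~\ref{prop:energyMultiplierBounds}) and, in each, pinning down the representation of $b_4^a$ that conceals every small denominator; once this is done there is no analytic difficulty beyond what already occurs in the Benjamin--Ono case.
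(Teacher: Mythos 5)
Your proposal matches the paper's argument: the same universal factorization of the Schr\"odinger resonance $\xi_1^2-\xi_2^2+\xi_3^2-\xi_4^2=-2(\xi_1+\xi_2)(\xi_2+\xi_3)$ on $\Gamma_4^\lambda$, the same divided-difference $q(\xi,\eta)=(\xi a(\xi)+\eta a(\eta))/(\xi+\eta)$ and its size and regularity bounds, and the same strategy of choosing among the resulting quotient representations the one in which every near-vanishing factor of the resonance has been cancelled against a coefficient of a $q$ evaluated at comparable frequencies (the paper's Subcases~1a, 3a, 3b), falling back to the trivial decomposition when the full resonance is $\sim\mu^2$ (Subcases~1b, 2, 3c). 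The only respect in which you stop short is the explicit enumeration of the dyadic configurations that the paper carries out; the mechanism you describe is exactly what each of those cases implements, so there is no gap of substance.
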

\begin{proof}
We prove the proposition through Case-by-Case analysis. Note the symmetries between $\xi_1$ and $\xi_3$, $\xi_2$ and $\xi_4$ and the pairs $\{ \xi_1,\xi_3 \}$ and $\{ \xi_2,\xi_4 \}$. Moreover, we dispose of irrelevant factors below.\\
\textbf{Case 1} $|\xi_3^*| \ll |\xi_1^*|$:\\
\underline{Subcase 1a} $|\xi_1| \sim |\xi_2| \gg \max( |\xi_3|,|\xi_4| )$:\\
In this subcase we find $|\xi_2+\xi_3| \sim |\xi_1|$ and decompose
\begin{equation*}
b_4^a(\xi_1,\xi_2,\xi_3,\xi_4) = \frac{\xi_1 a(\xi_1) + \xi_2 a(\xi_2)}{(\xi_1+\xi_2)(\xi_2+\xi_3)} + \frac{\xi_3 a(\xi_3) + \xi_4 a(\xi_4)}{(\xi_2+\xi_3)(\xi_1+\xi_2)}.
\end{equation*}
Using the notation from the proof of Proposition \ref{prop:energyMultiplierBounds}, we have
\begin{equation*}
b_4^a(\xi_1,\xi_2,\xi_3,\xi_4) = \frac{q(\xi_1,\xi_2)}{\xi_2+\xi_3} - \frac{q(\xi_3,\xi_4)}{\xi_2+\xi_3},
\end{equation*}
and the size and regularity estimates follow from the size and regularity estimates of $q$. These estimates were already discussed in Section \ref{section:EnergyEstimatesMBO}.\\
\underline{Subcase 1b} $|\xi_1| \sim |\xi_3| \gg \max( |\xi_2|, |\xi_4|)$:\\
In this subcase we find for the resonance function $|\omega(\xi_1)+\ldots+\omega(\xi_4)| \sim |\xi_1|^2$, and the size and regularity estimates for an extension of $b_4^a$ follow from considering the trivial decomposition
\begin{equation}
\label{eq:trivialDecomposition}
\sum_{i=1}^4 \frac{\xi_i a(\xi_i)}{(\xi_1+\xi_2)(\xi_2+\xi_3)}.
\end{equation}
\textbf{Case 2} $|\xi_1^*| \sim |\xi_3^*| \gg |\xi_4^*|$:\\
In this case it is clear again that the resonance function is of size $|\xi_1^*|^2$, and a suitable extension is provided through \eqref{eq:trivialDecomposition}.\\
\textbf{Case 3} $|\xi_1^*| \sim |\xi_4^*|$:\\
\underline{Subcase 3a} $\max(|\xi_1+\xi_2|,|\xi_2+\xi_3|) \ll |\xi_1^*|$:\\
We compute
\begin{equation*}
\begin{split}
b_4^a(\xi_1,\xi_2,\xi_3,\xi_4) &= \frac{a(\xi_1) \xi_1 + a(\xi_2)(\xi_2)}{(\xi_1+\xi_2)(\xi_2+\xi_3)} + \frac{a(\xi_3) \xi_3 + a(\xi_4)(\xi_4)}{(\xi_1+\xi_2)(\xi_2+\xi_3)} \\
&= \frac{q(\xi_1,\xi_2) - q(\xi_3,-\xi_1-\xi_2-\xi_3)}{\xi_2+\xi_3} \\
&= \frac{q(\xi_1,\xi_2) - q(\xi_1 + (\xi_2+\xi_3),\xi_2 - (\xi_2+\xi_3))}{\xi_2+\xi_3},
\end{split}
\end{equation*}
and the claim follows from the size and regularity properties of $q$.\\
\underline{Subcase 3b} $|\xi_1+\xi_2| \ll |\xi_1^*|$ and $|\xi_2+\xi_3| \sim |\xi_1^*|$:\\
We use the decomposition
\begin{equation*}
\begin{split}
b_4^a(\xi_1,\xi_2,\xi_3,\xi_4) &= \frac{a(\xi_1) \xi_1 + a(\xi_2) \xi_2}{(\xi_1+\xi_2)(\xi_2+\xi_3)} + \frac{a(\xi_3)\xi_3 + a(\xi_4) \xi_4}{(\xi_1+\xi_2)(\xi_2+\xi_3)} \\
&= \frac{q(\xi_1,\xi_2)}{\xi_2+\xi_3} + \frac{q(\xi_3,\xi_4)}{\xi_2+\xi_3},
\end{split}
\end{equation*}
and the claim follows from the considerations of Subcase 1a. In case $|\xi_1+\xi_2| \sim |\xi_1^*|$ and $|\xi_2+\xi_3| \sim |\xi_1^*|$ we argue mutatis mutandis.\\
\underline{Subcase 3c} $|\xi_1+\xi_2| \sim |\xi_2 + \xi_3| \sim |\xi_1^*|$:\\
The claim follows again from considering the decomposition \eqref{eq:trivialDecomposition}.
\end{proof}
In the following estimates we have decreased symmetry compared to Section \ref{section:EnergyEstimatesMBO}, but we still have the same frequency interactions. With Proposition \ref{prop:StrichartzEstimatesSEQ} playing the role of Proposition \ref{prop:StrichartzEstimatesMBO}, we can argue in most of the cases as above.

We record the estimate for the boundary term, which is derived like in Proposition \ref{prop:energyBoundaryTermBoundMBO}:
\begin{proposition}
Let $0<l<1/4<s$ and $E_i^{a,\lambda}$, $i=0,1$ be like above. Then, we find the following estimate to hold:
\begin{equation*}
|E^{a,\lambda}_1(u(t))| \lesssim E_0^{a,\lambda}(u(t)) \Vert u(t) \Vert_{H^{s,l}_{\lambda}}^2
\end{equation*}
\end{proposition}
For the remainder we derive the following estimate:
\begin{proposition}
\label{prop:RemainderEstimateDNLS}
We find the following estimate to hold:
\begin{equation*}
\begin{split}
\left| \int_0^T R_6^{a,\lambda}(u) \right| &\lesssim (E_0^{a,\lambda}(u(T)) + E_0^{a,\lambda}(u(0))) \sup_{t \in [-T,T]} \Vert u(t) \Vert^4_{H^{s,l}_{\lambda}} \\ 
&+ T (\Vert u \Vert^6_{F^{s-\tilde{\varepsilon},l}_\lambda(T)} + \Vert u \Vert^8_{F^{s-\tilde{\varepsilon},l}(T)}).
\end{split}
\end{equation*}
\end{proposition}
\begin{proof}
We consider dyadic frequency ranges $|\xi_{1i}| \sim M_i$, $i=1,2,3$ and $|\xi_i| \sim L_i$ for $i=2,3,4$ with the increasing rearrangements $M_1^* \leq M_2^* \leq M_3^*$, $L_2^* \leq L_3^* \leq L_4^*$, and as in the proof of Proposition \ref{prop:remainderEnergyEstimateMBO}, we only deal with high frequencies.\\
Further, let $K_i^*$ denote the increasing rearrangement of the union of $M_i$ and $L_j$.\\
\textbf{Case 1} $M_2^* \sim M_3^* \sim K_6^*$, $L_4^* \lesssim M_2^*$;\\
\textbf{Case 2} $L_3^* \sim L_4^* \sim K_6^*$, $M_3^* \lesssim L_3^*$: In both cases the argument from the proof of Proposition \ref{prop:remainderEnergyEstimateMBO} applies because it depends only on short-time Strichartz estimates and the symbol size and regularity.\\
\textbf{Case 3} $L_4^* \sim M_3^* \sim K_6^*$; $\max( L_3^*, M_2^*) \ll K_6^*$: This case needs more care as, due to the reduced symmetry, we can not always argue like in Case 3 from the proof of Proposition \ref{prop:remainderEnergyEstimateMBO}.\\
If $L_3 \sim K_6^*$, we have an improved estimate for the symbol, namely\\ $((K_6^*)^{2(s+\varepsilon)} K_4^*)/(K_6^*)^2$. In this case the claim can be concluded by two bilinear Strichartz estimates involving the high frequencies and two pointwise bounds. This gives
\begin{equation*}
\left| \int_0^T R^{a,\lambda}_6(M_i,L_j) \right| \lesssim T (K_1^* K_2^*)^{1/2} \frac{(K_6^*)^{2(s+\varepsilon)} K_4^*}{K_6^*} \prod_{i=1}^6 \Vert u_{k_i^*} \Vert_{F_{k_i^*,\lambda}}
\end{equation*}
with a straight-forward summation over the frequency blocks.\\
Note the symmetry between $M_1$, $M_3$ and $L_2$, $L_4$. Suppose that $L_2 \sim K_6^*$.
We write out the imaginary part to find
\begin{equation*}
\begin{split}
R_6^{a,\lambda} &= C \int_{\Gamma_6^\lambda} [b^{a}_4(\xi_{1},\xi_2,\xi_3,\xi_4) - b^a_4(\xi_1,-\xi_{11},-\xi_{12},-\xi_{13})] \\
&\quad \times \xi_{1} \hat{u}(\xi_{11}) \hat{\overline{u}}(\xi_{12}) \hat{u}(\xi_{13}) \hat{\overline{u}}(\xi_{2}) \hat{u}(\xi_3) \hat{\overline{u}}(\xi_4) d\Gamma_6^\lambda, \quad \xi_1 = \xi_{11} + \xi_{12} + \xi_{13}.
\end{split}
\end{equation*}
If $L_2 \sim M_1$, the same argument from Case 3 from the proof of Proposition \ref{prop:remainderEnergyEstimateMBO} is applicable as we find a more favorable bound for the difference of the multipliers after using the mean value theorem.\\
If $L_2 \sim M_2$, this is not the case, but the second resonance function is very favourable:
\begin{equation*}
\Omega^{(2)}(\xi_{11},\xi_{12},\xi_{13},\xi_2,\xi_3,\xi_4) = -\xi_{11}^2 + \xi_{12}^2 - \xi_{13}^2 + \xi_2^2 - \xi_3^2 + \xi_4^2 \gtrsim (K_6^*)^2.
\end{equation*}
Then, another integration by parts gives
\begin{equation*}
\begin{split}
R^{a,\lambda}_6(M_i,L_j) &= [ \Im \int_{\Gamma_6^\lambda, |\xi_i| \sim M_i,L_j} \frac{b_4^{a}(\xi_{11}+\xi_{12}+\xi_{13},\xi_2,\xi_3,\xi_4)}{\Omega^{(2)}(\xi_{11},\xi_{12},\xi_{13},\xi_2,\xi_3,\xi_4)} (\xi_{11}+\xi_{12}+\xi_{13}) \\
&\quad \times (\hat{u}(\xi_{11}) \hat{\overline{u}}(\xi_{12}) \hat{u}(\xi_{13}) \hat{\overline{u}}(\xi_2) \hat{u}(\xi_3) \hat{\overline{u}}(\xi_4) d\Gamma_6^\lambda ]_0^T \\
&\quad + [ \Im \int_{\substack{ |\xi_{12}| \sim |\xi_2| \sim K_6^*, \\ M_5^*, L_3^* \ll K_6^*}}  \frac{b_4^{a}(\xi_{11}+\xi_{12}+\xi_{13},\xi_2,\xi_3,\xi_4)}{\Omega^{(2)}(\xi_{11},\xi_{12},\xi_{13},\xi_2,\xi_3,\xi_4)} (\xi_{11}+\xi_{12}+\xi_{13}) \\
&\quad \times \hat{u}(\xi_{11}) \hat{\overline{u}}(\xi_{12}) \hat{u}(\xi_{13}) i \xi_2 ( \hat{\overline{u}}(\xi_{21}) \hat{u}(\xi_{22}) \hat{\overline{u}}(\xi_{23})) \hat{u}(\xi_3) \hat{\overline{u}}(\xi_4) d\Gamma_8^\lambda  + \ldots ] \\
&=: E_2^{a,\lambda}(u(T)) - E_2^{a,\lambda}(u(0)) + R_8^{a,\lambda}(u),
\end{split}
\end{equation*}
where we did not record the terms coming up in case the derivative hits another factor than $\hat{\overline{u}}(\xi_2)$. This case we have singled out as $|\xi_2| \sim K_6^*$ so the factor $\xi_2$ presumably gives the main contribution. From the estimates we shall see that further terms are lower order indeed.

Since $\Omega^{(2)}$ is bounded from below, we still have the necessary regularity to argue as in the proof of Proposition \ref{prop:remainderEnergyEstimateMBO}. Further, we have the size estimate
\begin{equation*}
\left| \frac{b_4^{a}(\xi_{11}+\xi_{12}+\xi_{13},\xi_2,\xi_3,\xi_4)}{\Omega^{(2)}(\xi_{11},\xi_{12},\xi_{13},\xi_2,\xi_3,\xi_4)} (\xi_{11}+\xi_{12}+\xi_{13}) \right| \lesssim \frac{a(K_6^*)}{(K_6^*)^2}.
\end{equation*}
Hence, estimating
\begin{equation*}
\begin{split}
\left| \int_{\lambda \T} P_{k_{11}} u P_{k_{12}} \overline{u} P_{k_{13}} u P_{k_2} \overline{u} P_{k_3} u P_{k_4} u dx \right| &\lesssim \Vert P_{k_6^*} u \Vert_{L^2_x(\lambda \T)} \Vert P_{k_5^*} u \Vert_{L^2_x(\lambda \T)} \\
&\quad \quad \times \prod_{i=1}^4 \Vert P_{k_i^*} u \Vert_{L^\infty_{x}(\lambda \T)}
\end{split}
\end{equation*}
gives together with the pointwise bound, after carrying out the sum over $K_i$,
\begin{equation*}
|E_2^{a,\lambda}(u(t))| \lesssim E_0^{a,\lambda}(u(t)) \Vert u(t) \Vert_{H^{s,l}_{\lambda}}^4.
\end{equation*}

Let $N_1^* \leq \ldots \leq N_8^*$ denote the increasing rearrangement of the dyadic sizes of the occurring frequencies. We shall estimate the expression like above according to the separation of the involved frequencies.\\
\textbf{Case 1} $N_6^* \ll N_7^* \sim N_8^*$, $N_5^* \ll N_6^* \sim N_8^*$: In both cases we apply two bilinear Strichartz estimates. Note that the time localization amounts to a factor of $T N_8^*$, and the two bilinear Strichartz estimates yield a gain of $(N_8^*)^{-1}$, the four pointwise bounds give a contribution $\prod_{i=1}^4 (N_i^*)^{1/2}$.\\
\underline{Subcase 1a}: $N_8^* \sim K_6^*$. Summing the derivatives $\xi_{11}+\xi_{12}+\xi_{13}$, $\xi_2$ with the same argument as in the proof of Proposition \ref{prop:remainderEnergyEstimateMBO} yields a contribution of $(N_8^*)^2$.
Further, the multiplier is estimated by $(N_8^*)^{2(s+\varepsilon)}/(N_8^*)^3$. This gives
\begin{equation*}
\left| \int_0^T R_8^{a,\lambda}(N_1^*,\ldots,N_8^*) \right| \lesssim T \frac{(N_8^*)^{2(s+\varepsilon)}}{N_8^*} \prod_{i=1}^4 (N_i^*)^{1/2} \prod_{i=1}^8 \Vert u \Vert_{F_{n_i^*,\lambda}} ,
\end{equation*}
which is actually summable for $s>1/6$.

Next, suppose that $N_8^* \gg K_6^*$. This implies $N_6^* \sim K_6^*$ or $K_6^* \sim N_5^* \ll N_6^*$. In the first scenario, the above estimate yields the following bound
\begin{equation*}
\begin{split}
\left| \int_0^T R_8^{a,\lambda}(N_1^*,\ldots,N_8^*) \right| &\lesssim T N_8^* (N_8^*)^{-1} \frac{(N_6^*)^{2(s+\varepsilon)}}{N_6^*} \frac{(N_6^*)^2}{(N_6^*)^2} \\
&\quad \quad \times \prod_{i=1}^4 (N_i^*)^{1/2} \prod_{i=1}^8 \Vert u \Vert_{F_{n_i^*,\lambda}}
\end{split}
\end{equation*}
and in the second case
\begin{equation*}
\left| \int_0^T R_8^{a,\lambda}(N_1^*,\ldots,N_8^*) \right| \lesssim T \frac{(N_5^*)^{2(s+\varepsilon)}}{N_5^*} \prod_{i=1}^4 (N_i^*)^{1/2} \prod_{i=1}^8 \Vert u \Vert_{F_{n_i^*,\lambda}}.
\end{equation*}
In both cases summing over dyadic frequencies yields
\begin{equation*}
\left| \int_0^T R_{8}^{a,\lambda} \right| \lesssim T \Vert u \Vert^8_{F_{\lambda}^{s-\tilde{\varepsilon},l}(T)}
\end{equation*}
for $s>1/6$.\\
\textbf{Case 2}: $N_4^* \ll N_5^* \sim \ldots \sim N_8^*$, $N_3^* \ll N_4^* \sim \ldots \sim N_8^*$. From the constraint on the initial frequencies $N_3^* \sim N_8^*$ is ruled out and it has to hold $N_8^* \sim K_6^*$.

Either way, one applies one bilinear Strichartz estimate on $u_{n_4^*} u_{n_8^*}$ (in the first subcase this is clear, in the latter, since there is an odd number of high frequencies, one pair is amenable to one bilinear Strichartz estimate) and three $L^6_{t,x}$-Strichartz estimates on the remaining high frequencies $u_{n_5^*}$, $u_{n_6^*}$, $u_{n_7^*}$; the other frequencies are estimated by pointwise bounds. Here, we do not have to take into account complex conjugation because we argue by Proposition \ref{prop:StrichartzEstimatesSEQ}.

This gives
\begin{equation*}
\begin{split}
\left| \int_0^T R_8^{s,a,\lambda}(N_1^*,\ldots,N_8^*) \right| &\lesssim T N_8^* (N_8^*)^{-1/2} \frac{(N_8^*)^{2(s+\varepsilon)}}{(N_8^*)^3} (N_8^*)^2 \\
&\quad \times \prod_{i=1}^3 (N_i^*)^{1/2} \prod_{i=1}^8 \Vert u_{n_i^*} \Vert_{F_{n_i^*,\lambda}},
\end{split}
\end{equation*}
which, after summation, yields the estimate
\begin{equation*}
\left| \int_0^T R_8^{s,a,\lambda} \right| \lesssim T \Vert u \Vert^8_{F^{s-\varepsilon}_\lambda(T)}
\end{equation*}
for $s>1/6$. The proof is complete.
\end{proof}
With the bound for the remainder terms and the boundary terms at disposal, the energy estimate is carried out like for the modified Benjamin-Ono equation.\\
The concluding arguments from Section \ref{section:ProofMBO} adapt mutatis mutandis.
\section*{Acknowledgements}
I would like to thank Professor Sebastian Herr for suggesting to work on quasilinear dispersive equations and helpful discussions during early stages of this work. Moreover, I am much obliged to the anonymous referee for a careful reading of an earlier manuscript, which gave rise to many improvements.

Financial support by the German Research Foundation (IRTG 2235) is gratefully acknowledged.

\end{document}